\newif\ifdebug                                                      %
\numberwithin{equation}{section}
\newtheorem{theorem}[equation]{Theorem}
\newtheorem{proposition}[equation]{Proposition}
\newtheorem{lemma}[equation]{Lemma}
\newtheorem*{lemma*}{Lemma}
\newtheorem{q}[equation]{Question}
\newtheorem{corollary}[equation]{Corollary}
\newtheorem{example}[equation]{Example}
\theoremstyle{definition}
\theoremstyle{remark}
\newtheorem{Remark*}[equation]{Remark}
\newtheorem{rmk}[equation]{Remark}
\newcommand{\wt}[1]{\widetilde{#1}}
\def    \Proj      {{\operatorname{Proj}}\, }
\DeclareMathOperator{\Sp}{Sp}
\DeclareMathOperator{\gl}{GL}
\DeclareMathOperator{\so}{SO}
\DeclareMathOperator{\un}{U}
\DeclareMathOperator{\cone}{cone}
\DeclareMathOperator{\conv}{conv}
\DeclareMathOperator{\hh}{H}
\DeclareMathOperator{\inter}{Int}
\def \Z {{\mathbb Z}}
\def \R {{\mathbb R}}
\def \Q {{\mathbb Q}}
\def \C {{\mathbb C}}
\def    \P	{{\mathbb P}}
\def \CP {{\mathbb C}{\mathbb P}}
\def \calA {{\mathcal A}}
\def \calF {{\mathcal F}}
\def    \calL   {{\mathcal L}}
\def   \mat {M_n(\mathbb{Z})}
\def    \tlambda	{{\widetilde \lambda}}
\def    \tomega{{\widetilde \omega}}
\def \tM {\wt{M}}
\def \tA {\wt{A}}
\def    \tl	{{\widetilde l}}
\def    \tx	{{\widetilde x}}
\def    \tz	{{\widetilde z}}
\title{Toric degenerations in symplectic geometry}
\author[Milena Pabiniak]{Milena Pabiniak}
\address{Mathematisches Institut, Universit\"at zu K\"oln, Weyertal 86-90, D-50931 K\"oln, Germany
\\pabiniak@math.uni-koeln.de}
\begin{document}
\maketitle
\begin{abstract}
A toric degeneration in algebraic geometry is a process where a given projective variety is being degenerated into a toric one. Then one can obtain information about the original variety via analyzing the toric one, which is a much easier object to study.
Harada and Kaveh described how one incorporates a symplectic structure into this process, providing a very useful tool for solving certain problems in symplectic geometry. 
Below we present applications of this method to questions about the Gromov width, and cohomological rigidity problems.
\end{abstract}
\section{Introduction}
Manifolds and algebraic varieties equipped with a group action are usually better understood as a presence of an action is a sign of certain symmetries. In particular, {\it toric varieties} form a very well understood class of varieties. These are varieties which contain an algebraic torus $T^n_{\C}:=(\C^*)^n$ as a dense open subset and are equipped with an action of  $T^n_\C$ which extends the usual action of $T^n_\C$ on itself.
(For more about toric varieties see, for example, \cite{CLS} and \cite{F}.)
To understand a given projective variety $X$ one can try to ``degenerate" it to a toric one, i.e., form a family of varieties with generic member $X$ and one special member some toric variety $X_0$. The varieties $X$ and $X_0$ are closely related and thus one can obtain information about $X$ by studying $X_0$. Moreover, such a degeneration gives a map from $X$ to $X_0$ which, in certain situations, is preserving some special structures $X$ and $X_0$ might be equipped with (for example: a symplectic structure).

One can use the method of toric degenerations to solve problems in symplectic geometry. In this work we discuss the following two applications:
\begin{itemize}
\item calculating lower bounds for the Gromov width, i.e., trying to find the largest ball which can be symplectically embedded into a given symplectic manifold;
\item constructing symplectomorphisms needed for a cohomological rigidity problem for symplectic toric manifolds, that is, the question of whether any two symplectic toric manifolds with isomorphic integral cohomology rings (via an isomorphism preserving the class of the symplectic form) are  symplectomorphic.
\end{itemize}
Recall that a $2n$-dimensional symplectic manifold $(M, \omega)$ equipped with an effective Hamiltonian action of an $n$-dimensional torus $T=(S^1)^n$ is called a {\it symplectic toric manifold}. 
 The action being Hamiltonian means that there exists a moment map, that is,  a $T$-invariant map  $\mu \colon M \rightarrow \textrm{Lie}(T)^*\cong \R^n$ such that for every $X \in \textrm{Lie}(T)$ it holds that $\iota_{X^\sharp} \omega=d\mu^X$ where $X^\sharp$ denotes the vector field on $M$ induced by $X$ and $\mu^X \colon M \rightarrow \R$ is defined by $\mu^X(p)=\langle \mu(p), X\rangle.$
Such a manifold can be given a complex structure interacting well with the symplectic one so that $\omega$ is a K\"ahler form and $(M,\omega)$ a K\"ahler manifold. In particular, symplectic toric manifolds are toric varieties in the sense of algebraic geometry. A theorem of Delzant states that we have a bijection 
\begin{displaymath}
\begin{array}{ccc}
 \{2n \mbox{-dim compact symplectic toric manifolds} \}
&& \{\mbox{rational, smooth polytopes
in }\R^n  \}\\
\mbox{up to  equivariant} &  \Leftrightarrow &\mbox{up to translations and }\\
\mbox{ symplectomorphisms}& & \gl(n,\Z)\mbox{ transformations}
\end{array}
\end{displaymath}
(A polytope in $\R^n$ is called rational if the directions of its edges are in $\Z^n$. It is called smooth if for every vertex the primitive vectors in the directions of edges meeting at that vertex form a $\Z$-basis for $\Z^n$.)
In this bijection, a manifold corresponds to the image of its moment map, therefore the associated polytope is often called a moment polytope or a moment image.
Not much is known about a classification of symplectic toric manifolds up to symplectomorphisms. The cohomological rigidity mentioned in the second bullet above asks if such classification might be given by the integral cohomology rings. 

In Sections \ref{sec gw} and \ref{sec coh rigidity} respectively we describe the above problems in detail and explain how one can use toric degenerations to solve problems of this type. In particular we prove (rather, outline the proofs of) the following two results, obtained in projects joint respectively with I.~Halacheva, X.~Fang and P.~Littelmann, and S.~Tolman.
\begin{theorem} \cite{HP},\cite{FLP}\label{thm gw}
Let $K$ be a compact connected simple Lie group.
The Gromov width of a coadjoint orbit $\mathcal{O}_\lambda$ through a point $\lambda$ lying on some rational line in $(Lie\, K)^*$, equipped with the Kostant--Kirillov--Souriau symplectic form, is at least
\begin{equation}\label{gw formula}
\min\{\, \left|\left\langle \lambda,\alpha^{\vee} \right\rangle \right|;\  \alpha^{\vee} \textrm{ a  coroot and }\left\langle \lambda,\alpha^{\vee} \right\rangle \neq0\}.
 \end{equation}
 \end{theorem}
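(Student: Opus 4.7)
The plan is to apply the Harada--Kaveh machinery of symplectic toric degenerations, which reduces a lower bound on the Gromov width of $\mathcal{O}_\lambda$ to a lower bound for a carefully chosen toric variety, and then to exploit the explicit combinatorics of either a string polytope (the HP approach) or a Newton--Okounkov body (the FLP approach) associated to $\lambda$. The starting observation is that a coadjoint orbit $(\mathcal{O}_\lambda,\omega_{KKS})$ through an integral (or more generally rational) point is symplectomorphic to a generalised flag variety $K/K_\lambda$ equipped with the K\"ahler form of the ample line bundle $\calL_\lambda$. The rationality hypothesis lets us clear denominators and reduce to the integral case, so that standard representation-theoretic objects (Demazure modules, dual canonical bases, Plücker embeddings) are available.

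I would first invoke the toric degeneration of $(K/K_\lambda,\calL_\lambda)$ to a (possibly singular) projective toric variety $X_0$ whose moment polytope is the polytope $\Delta_\lambda$ of interest: in \cite{HP} this is Caldero's degeneration associated to a reduced expression for the longest Weyl element and the corresponding Littelmann string polytope, while in \cite{FLP} it is the Anderson / Kaveh construction of a degeneration whose moment polytope is the Newton--Okounkov body with respect to a suitable valuation. The symplectic content of Harada--Kaveh then provides, for any compact set $B$ in the interior of $\Delta_\lambda$, a symplectic embedding into $\mathcal{O}_\lambda$ of the preimage of $B$ under the toric moment map $\mu_0\colon X_0\to\Delta_\lambda$. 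Consequently
\[
 \mathrm{gw}(\mathcal{O}_\lambda,\omega_{KKS})\;\geq\;\sup\{\,\pi r : \text{an open standard simplex of size } r \text{ embeds into } \mathrm{int}\,\Delta_\lambda\text{ via } AGL(n,\Z)\,\},
\]
by the standard lower bound for symplectic toric manifolds (Karshon--Tolman, Traynor, Lu), which requires only that the degeneration be smooth in a neighbourhood of the vertex under consideration; this mild smoothness can be checked at the chosen vertex.

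The combinatorial heart of the argument is therefore to locate a vertex $v$ of $\Delta_\lambda$ at which the outgoing primitive edge vectors form a $\Z$-basis of the lattice, with edge lengths exactly $|\langle \lambda,\alpha^{\vee}\rangle|$ for an appropriate family of coroots $\alpha^\vee$. For a string polytope, the natural candidate is the vertex corresponding to the lowest-weight vector of the associated Demazure crystal; a direct reading of Littelmann's defining inequalities along a reduced expression $w_0 = s_{i_1}\cdots s_{i_N}$ shows that the edge lengths there are precisely the values $\langle w_k\lambda,\alpha_{i_k}^{\vee}\rangle$ as $k$ varies, and after applying the Weyl group action these realise the coroot pairings appearing in \eqref{gw formula}. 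For the Newton--Okounkov body of FLP the analogous computation is carried out at the vertex indexed by the trivial standard monomial, where the leading terms of the valuation give the same pairings. In either case one concludes that an open standard simplex of edge length $\min\{|\langle\lambda,\alpha^\vee\rangle|:\langle\lambda,\alpha^\vee\rangle\neq 0\}$ fits in the interior of $\Delta_\lambda$, which combined with the previous paragraph yields the claimed bound.

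The main obstacle, and where the two papers genuinely do work, is the combinatorial step: showing that the distinguished vertex actually has the advertised smooth local structure and computing its outgoing edge lengths in type-independent language. In type $A$ the reduction to Gelfand--Tsetlin patterns makes this essentially transparent, but for other types one must either invoke detailed properties of Littelmann paths or of the filtration defining the Newton--Okounkov body, and check that the rationality of $\lambda$ is enough to guarantee the required integrality. A secondary technical point is that the Harada--Kaveh theorem produces embeddings of compacta inside the interior of the polytope, so one must argue that an open simplex of any size strictly less than the claimed one embeds; a limiting argument using the continuity of Gromov width in the symplectic form takes care of this.
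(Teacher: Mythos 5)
Your outline correctly identifies the overall strategy of the paper: realize $\mathcal{O}_\lambda$ as $G/P$ with the line bundle $\mathcal L_\lambda$, apply Harada--Kaveh to degenerate it to a toric variety whose polytope is a string polytope (for \cite{HP}) or an Okounkov body (for \cite{FLP}), reduce by rationality/rescaling to the integral case, and then find a large simplex in that polytope via Littelmann/FFL-type combinatorics. This matches the paper's route in broad strokes. However, there is a substantive gap concerning the existence of the toric degeneration.

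A flat toric degeneration of Harada--Kaveh type exists only when the value semigroup $S(\nu,\mathcal L_\lambda)$ is finitely generated, and this is precisely the point the paper flags as the obstruction. For the lowest-term valuations used in \cite{FLP} (induced by an enumeration of positive roots), establishing finite generation of $S$ is in general ``very difficult'' and is not done; your argument nonetheless proceeds as if the degeneration $\pi^{-1}(0)=X_0$ and its moment map $\mu_0\colon X_0\to\Delta_\lambda$ were always available, and then adds a condition about ``mild smoothness at the chosen vertex.'' But if $S$ is not finitely generated there is no flat family and no $X_0$ at all, so no amount of local smoothness at a vertex rescues the construction. What \cite{FLP} actually uses is Kaveh's weaker but unconditional result, \cite[Corollary 12.4]{K}, cited in this paper as Corollary~\ref{cor k gw}: even without finite generation one can construct a (not necessarily flat) family with special fiber $(\C^*)^n$ equipped with a rescaled toric form $\frac{1}{m}\omega_m$, an open dense subset of $X$ symplectomorphic to it, and deduce that the Gromov width of $(X,\omega)$ is at least the size of the largest open simplex fitting into the interior of the Okounkov body $\Delta$. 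That replacement of the toric-degeneration theorem by Kaveh's $(\C^*)^n$-result is the key technical ingredient absent from your proposal, and without it the general-type argument does not close. (For the symplectic group in \cite{HP} the string semigroup \emph{is} finitely generated and saturated, so Corollary~\ref{cor gw tool} applies directly; that is the special, not the generic, situation.)

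A minor point: the final ``limiting argument using continuity of Gromov width'' is unnecessary. The toric embedding statement, Proposition~\ref{embedding}, is already phrased as a supremum over sizes of \emph{open} simplices contained in the moment image, so the open-versus-closed issue is handled intrinsically and no appeal to continuity of the width in the symplectic form is needed.
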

\begin{theorem}\cite{PT}\label{thm coh rig}
Let $M$ and $N$ be Bott manifolds such that $\hh^*(M;\Q)$ and $\hh^*(N;\Q)$ are isomorphic to $\Q[x_1,\ldots,x_n]/\langle x_1^2, \ldots, x_n^2 \rangle$. For any ring isomorphism $F \colon \hh^*(M;\Z) \rightarrow \hh^*(N;\Z)$ sending the class $[\omega_M]$ of the symplectic form on $M$ to the class $[\omega_N]$ of the symplectic form on $N$, there exists a symplectomorphism $f \colon (N, \omega_N)\rightarrow (M,\omega_M)$ such that the map $\hh^*(f)\colon \hh^*(M;\Z) \rightarrow \hh^*(N;\Z)$ induced by $f$ on integral cohomology rings is $F$.
\end{theorem}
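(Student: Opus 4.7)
The plan is to proceed by induction on the height $n$ of the Bott tower. The base case $n=1$ is immediate: both $M$ and $N$ are copies of $\CP^1$ equipped with symplectic forms of equal area (since $F[\omega_M]=[\omega_N]$), and one can arrange the identification to realise $F$. For the inductive step, recall that a Bott manifold $M$ comes with a canonical projection $\pi_M\colon M \to M'$ onto a Bott manifold of height $n-1$, presenting $M$ as a $\CP^1$-bundle; the hypothesis that $\hh^*(M;\Q)\cong\Q[x_1,\dots,x_n]/\langle x_i^2\rangle$ is inherited by $M'$, and similarly for $N$.

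Next, encode $M$ by its Bott data: an upper-triangular integer Bott matrix $A_M$ together with the integer vector giving $[\omega_M]$ in the canonical $\hh^2$-basis coming from the section divisors of the Bott tower, and similarly for $N$. The collapse of the rational cohomology to $\Q[x_1,\ldots,x_n]/\langle x_i^2\rangle$ imposes strong divisibility constraints on the entries of $A_M$ and $A_N$. Using $F$ together with the condition $F[\omega_M]=[\omega_N]$, translate these constraints, via the Delzant correspondence, into a combinatorial bijection between the facets of the moment polytopes $P_M$ and $P_N$; both polytopes are combinatorial $n$-cubes, and this bijection respects the affine-length data prescribed by the symplectic class.

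Having matched the combinatorial and cohomological data, apply the toric degeneration machinery. The goal is to build a continuous family of Delzant polytopes $\{P_t\}_{t \in [0,1]}$ with $P_0 = P_N$, $P_1 = P_M$, all $P_t$ combinatorial $n$-cubes whose facets move affine-linearly in $t$, and whose symplectic volumes vary consistently with the class tracked by $F$. Such a family yields a smooth one-parameter family of symplectic toric manifolds $(X_t,\omega_t)$ with fixed underlying smooth manifold, so a Moser argument produces a symplectomorphism $f\colon(N,\omega_N)\to(M,\omega_M)$. The induced map $\hh^*(f)$ equals $F$ because $F$ is determined by its action on the facet divisor classes, and these classes are carried continuously along the deformation; this can be verified inductively by restricting to the pre-image of the base in the Bott tower and invoking the inductive hypothesis on $M'$, $N'$.

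The principal obstacle is the construction of the interpolating family $\{P_t\}$: the two Bott matrices need not be connected through a path of integral Bott matrices compatible with the combinatorial $n$-cube structure, so some intermediate polytopes may fail to be Delzant, or even rational. When this obstruction arises, the remedy is to invoke the Harada--Kaveh framework in its full generality: rather than deforming within the toric category, perform a toric degeneration of a (non-toric) intermediate K\"ahler manifold whose generic fiber is $M$ and whose special fiber is $N$, and extract from it a symplectomorphism between the two. The conclusion then follows by composing finitely many such degenerations, with compatibility with $F$ verified stage by stage using the naturality of the Harada--Kaveh construction on integral cohomology.
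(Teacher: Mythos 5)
Your high-level instinct is right --- toric degenerations are the engine --- but the proposal has two genuine gaps where the actual mathematical content of the theorem lives.

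First, the plan to interpolate with a continuous family of Delzant polytopes $\{P_t\}$ and close with a Moser argument cannot work in the case that actually matters. If the Bott matrices $A_M$ and $A_N$ differ, the outward normals of $P_M$ and $P_N$ differ, hence the underlying toric manifolds $X_{P_M}$ and $X_{P_N}$ are distinct as toric (indeed as complex) manifolds. A Moser isotopy requires a one-parameter family of symplectic forms on a \emph{fixed} manifold; a path of polytopes whose normal fans change does not give that. Already for the Hirzebruch pair $(\mathcal{H}_0,\omega_{(1,3)})$ and $(\mathcal{H}_{-2},\omega_{(1,5)})$ of Example~\ref{eg h} there is no path of Delzant trapezoids connecting them (the intermediate polytopes have irrational normals), and this is the generic situation, not an edge case. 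You notice this yourself, but the ``remedy'' is where the proof really begins.

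Second, the fallback --- ``invoke the Harada--Kaveh framework in full generality'' to produce a degeneration with generic fiber $M$ and special fiber $N$, then compose finitely many such and appeal to ``naturality on integral cohomology'' --- is not a proof but a restatement of the goal. Harada--Kaveh (Theorem~\ref{from hk}) does not hand you a degeneration; you must supply a valuation $\nu$ for which the semigroup $S(\nu,\calL)$ is finitely generated, saturated, and has $\Proj\C[S]$ \emph{smooth} (this last is crucial and rare), and then compute the Okounkov body. The paper's Proposition~\ref{basic change} does exactly this by an explicit choice: the lowest-term valuation attached to the coordinate change $u_k = f_k - f_\ell^c$, analyzed through the combinatorial ``sliding'' operator $\calF_{-e_k+c e_\ell}$ on lattice points of $m\,\Delta(A,\lambda)$, with normality of the dilated polytope used to identify $\nu(L^m)$ with $m\,\Delta(\tA,\tlambda)\cap\Z^n$. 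Nothing in your proposal produces such a valuation or verifies smoothness/saturation of the central fiber; without that, no symplectomorphism is produced. Moreover, Harada--Kaveh has no built-in ``naturality'' guaranteeing that the resulting symplectomorphism induces the prescribed isomorphism $F$; the paper must establish this separately (Lemma~\ref{str rigidity hn}), by classifying the primitive square-zero classes in $\hh^2$, showing $F$ is forced to act by a permutation on them, matching lengths $\lambda_j$, and only then invoking Delzant's theorem together with an explicit check on facet divisors.

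Finally, the induction on tower height is delicate in a way the proposal does not address: a ring isomorphism $F$ preserving $[\omega]$ need not respect the fibration structure (it may permute the ``levels''), so it is not automatic that $F$ descends to an isomorphism between $\hh^*(M';\Z)$ and $\hh^*(N';\Z)$ of the bases. The paper sidesteps this by first normalizing both manifolds to a product of standard models $\mathcal{H}(\lambda_1,\dots,\lambda_{l_1})\times\cdots$ (Corollary~\ref{std form}), where the possible $F$ can be classified directly, rather than by a top-down induction.
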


There are other applications of toric degenerations in symplectic geometry. For example, one can obtain information about the Ginzburg--Landau potential function on $X$ from that of $X_0$ and thus detect some non-displaceable Lagrangians of $X$ (see \cite{NNU}).

{\bf Acknowledgements.} First of all, the author would like to thank her collaborators: Xin Fang, Iva Halacheva, Peter Littelmann, and Sue Tolman. 
Results contained in this manuscript were obtained in collaboration with the above mathematicians (\cite{HP}, \cite{FLP}, \cite{PT}) and therefore all of them could also be considered as authors of this paper. (However, any remaining mistakes are due to me.)
The author also thanks the organizers of the workshops ``Interactions with Lattice Polytopes" for giving her the opportunity to participate and present her results at these workshops.

The author is supported by the DFG (Die Deutsche Forschungsgemeinschaft) grant CRC/TRR 191 ``Symplectic Structures in Geometry, Algebra and Dynamics".
\section{Toric degenerations}\label{sec toric deg}
A {\bf toric degeneration} of a projective variety $X$ is a flat family $\pi \colon \mathfrak{X} \rightarrow \C$ with generic fiber $X$ and one special fiber $ X_0=\pi^{-1}(0)$, a (not necessarily normal) toric variety. A construction of such a degeneration of a projective variety $X$, equipped with a very ample line bundle satisfying certain conditions, can be found in Anderson (\cite[Theorem 2]{A}).
\begin{example}
Using the Pl\"ucker embedding,\footnote{Recall that the Pl\"ucker embedding sends a Grassmannian spanned by vectors $v,w \in \C^4$ to a point $[x_{12}:\ldots:x_{34}]\in \C\P^5$ with $x_{ij}$ equal to the determinant of the $2\times 2$ minor of $[v^T,w^T]$ spanned by rows $i$ and $j$.}
 view $X=Gr(2,\C^4)$, the Grassmannian of $2$-planes in $\C^4$, as a subset of $\C\P^5$ with coordinates $\{x_{ij};\ 1\leq i<j \leq 4\}$, consisting of points satisfying
$$x_{12}x_{34}-x_{13}x_{24}+x_{14}x_{23}=0.$$
Consider the subset
$ \mathfrak{X} \subset \C\P^5 \times \C$ consisting of points satisfying 
$$x_{12}x_{34}-x_{13}x_{24}+tx_{14}x_{23}=0,$$
where $t$ denotes the coordinate in $\C$. Let $\pi \colon \mathfrak{X} \rightarrow \C$ be the restriction to $\mathfrak{X} $ of the projection onto the second factor. This family constitutes a toric degeneration of $Gr(2,\C^4)$. Clearly $\pi^{-1}(1)$ is $Gr(2,\C^4)$. Moreover, performing a change of coordinates, one can show that $\pi^{-1}(t)$ for $t\neq 0$ is also bihomolomorphic to $Gr(2,\C^4)$. The central fiber, $\pi^{-1}(0)$, is described by the binomial ideal $\langle x_{12}x_{34}-x_{13}x_{24} \rangle $ and thus is a toric variety.
\end{example}
Harada and Kaveh in \cite{HK} enriched the construction of Anderson by incorporating a symplectic structure.
They start with a smooth projective variety $X$, of complex dimension $n$, equipped with a very ample line bundle $\calL$, with some fixed Hermitian structure. Let $L:=\hh^0(X, \mathcal{L})$ denote the vector space of holomorphic sections, $\Phi_\calL \colon X \rightarrow \P (L^*)$ the Kodaira embedding and $\omega=\Phi_\calL^*(\omega_{FS})$ the pull back of the Fubini--Study form, i.e., of the standard symplectic structure on complex projective spaces. Then $(X,\omega)$ is a K\"ahler manifold.
With this data they construct (under certain assumptions) not only a flat family $\pi \colon \mathfrak{X} \rightarrow \C$ but also a K\"ahler structure $\tilde{\omega}$ on (the smooth part of) $\mathfrak{X}$ so that $(\pi^{-1}(1), \tilde{\omega}_{|\pi^{-1}(1)})$ is symplectomorphic to $(X,\omega)$.
Moreover, the special fiber $X_0=\pi^{-1}(0)$ inherits a $2$-form, the restriction of $\tilde{\omega}$, defined on its smooth part $U_0:=(X_0)_{\textrm{smooth}}$, and thus it also obtains a divisor. If $X_0$ is normal, then the polytope associated to $X_0$ and this divisor by the usual procedure of toric algebraic geometry (see, for example, Chapter 4 of \cite{CLS}) is the closure of the moment image of the (non-compact) symplectic toric manifold $(U_0,\tilde{\omega}_{|U_0}).$ As we will see,
this polytope can be computed by analyzing the behaviour of the holomorphic sections of $\calL$.
Here are more details about this procedure.

Denote by $L^m$ the image of the $\textrm{span }\langle f_1\cdot \ldots \cdot f_m\,; \ f_i \in L \rangle$ in $\hh^0(X, \mathcal{L}^{\otimes m})$ and by 
$R=\C[X]=\oplus_{m\geq 0}\,L^m$ the homogeneous coordinate ring of $X$ with respect to the embedding $\Phi_\calL$. 
An important ingredient of the construction is a choice of a {\it valuation with one dimensional leaves}, $\nu \colon \C(X)\setminus \{0\} \rightarrow \Z^n$, from the ring $\C(X)$ of rational functions on $X$. 
A precise definition of a general valuation can be found, for example, in \cite[Definition 3.1]{HK}. 
In this paper we only use valuations induced by a flag of subvarieties and a special case of these, called {\it lowest/highest term valuations associated to a coordinate system}.

\begin{example}[Lowest/highest term valuations of a coordinate system] \cite[Example 3.2]{HK}\label{eg lh val}
Fix a (smooth) point $p \in X$ and let $(u_1, \ldots, u_n)$ be a system of coordinates in a neighborhood of $p$, meaning that $u_1,\ldots, u_n$ are regular functions at $p$, vanishing at $p$, and such that their differentials $du_1,\ldots, du_n$ are linearly independent at $p$. Then any regular function at $p$ can be represented as  a power series $\sum_{\alpha \in \Z_{\geq 0}^n}\,c_\alpha u^\alpha$. Here by $ u^\alpha$, with $\alpha=(\alpha_1, \ldots, \alpha_n)\in \Z_{\geq 0}^n$, we mean $u_1^{\alpha_1}\cdot \ldots \cdot u_n^{\alpha_n}$. Choose and fix a total order $>$ on $\Z^n$ respecting the addition, for example the lexicographic order. Define a map $\nu$ from the set of functions regular at $p$ to $\Z^n$ by 
$$\nu\, \big(\sum_{\alpha \in \Z_{\geq 0}^n}\,c_\alpha u^\alpha \big)=\min\{\alpha;\ c_\alpha \neq 0\},$$
and extend it to $\C(X)\setminus \{0\}$ by setting $\nu (f/g)=\nu(f)-\nu(g)$. Then $\nu$ is a valuation with one dimensional leaves, called a \emph{lowest term valuation}. If one uses $\max$ instead of $\min$ in the definition of $\nu$, one obtains a \emph{highest term valuation}.
\end{example}
\begin{example}[Valuations induced by a flag of subvarieties] \cite[Example 3.3]{HK}\label{eg flag val} 
Take a flag of normal subvarieties (called a Parshin point) of $X$
$$\{p\}=Y_n \subset \ldots \subset Y_0=X,$$
with $\dim_\C (Y_k)=n-k$ and $Y_k$ non-singular along $Y_{k+1}$. 
By the non-singularity assumption there exists a collection of rational functions $u_1,\ldots,u_n$ on $X$ such that $u_{k|Y_{k-1}}$ is a rational function on $Y_{k-1}$ which is not identically zero and which has a zero of first order on $Y_{k}$. 
Then the lowest term valuation with respect to the lexicographic order can alternatively be described in the following way: for any $f \in \C(X)$, $f \neq 0$, the valuation $v(f)=(k_1,\ldots,k_n)$ where
$k_1$ is the order of vanishing of $f$ on $Y_1$, $k_2$ is the order of vanishing of $f_1:=(u_1^{-k_1} f)_{|Y_1}$ on $Y_2$, etc. 
\end{example}

Given such $X$, $\calL$, and $\nu$ we form a semigroup $S=S(\nu, \calL)$,
 in the following way. Fix a non-zero element $h \in L$ and use it to identify\label{identification} $L$ with a subspace of $\C(X)$ by mapping $f \in L$ to $f/h \in \C(X)$. 
 Similarly identify 
 $L^m$ with a subspace of $\C(X)$ by sending $f \in L^m$ to $f/h^m \in \C(X)$. As any valuation satisfies $\nu(fg)=\nu(f)+\nu(g)$, the set
$$S=S(\nu,\calL)=\bigcup _{m\geq 0} \{ (m, \nu(f/h^m))\,|\,f \in L^m \setminus \{0\}\,\}$$
is a semigroup with identity (i.e. a monoid). 
If $S$ is finitely generated, one can construct a toric degeneration whose special fiber is a toric variety $\Proj \C[S]$ (which is normal if $S$ is saturated).
Moreover we obtain an Okounkov body 
$$ \Delta=\Delta(S)=\overline{\conv \big( \bigcup_{m>0} \{x/m\,|\, (m,x) \in S\}\big)}\subset \R^n.$$
Note that if $S$ is finitely generated, then $\Delta$ is a rational convex polytope.
The toric variety corresponding to $\Delta$ is the normalization of $\Proj \C[S]$.

In the following theorem we rephrase several results from \cite{HK}.
\begin{theorem}\cite{HK}\label{from hk}
Let $\calL$ be a very ample Hermitian line bundle on a
smooth $n$-dimensional projective variety $X$ and $\omega=\Phi_\calL^*(\omega_{FS})$ the induced symplectic form.
Let $\nu \colon \C(X) \setminus \{0\}\rightarrow \Z^n$ be a valuation with one dimensional leaves, and such that
the associated semigroup $S$ is finitely generated.
Then 
\begin{itemize}
\item There exists a toric degeneration $\pi \colon \mathfrak{X} \rightarrow \C$ with generic fiber $X$ and special fiber $X_0 := \Proj \C[S]$, and a K\"ahler structure $\tilde{\omega}$ on (the smooth part of) $\mathfrak{X}$ such that 
$(\pi^{-1}(1), \tilde{\omega}_{|\pi^{-1}(1)})$ is symplectomorphic to $(X,\omega)$
and the closure of the moment image of symplectic toric manifold 
$(U_0, \tilde{\omega}_{|U_0})$, where $U_0:=(X_0)_{\textrm{smooth}}$, is the Okounkov body $\Delta (S)$. 
The set $U_0$ contains the preimage of the interior of $\Delta(S)$.
\item Moreover, there exists 
a surjective continuous map $\phi \colon X \to X_0$ 
that restricts
to a symplectomorphism from $(\phi^{-1}(U_0), \omega)$ to $(U_0, \tilde{\omega}_{|U_0})$.
\end{itemize}
In particular, if $X_0=\Proj \C[S]$ is smooth (thus also normal), then $\phi^{-1}(U_0)=X$ and therefore $\phi$ provides a symplectomorphism between $(X,\omega)$ and the symplectic toric manifold $(X_{\Delta(S)}, \omega_{\Delta(S)})$ associated to $\Delta(S)$ via Delzant's construction. 
\end{theorem}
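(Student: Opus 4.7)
My plan is to follow Harada--Kaveh's strategy: build the degeneration algebraically from the valuation $\nu$, realize it projectively to inherit a K\"ahler form, identify its moment image as $\Delta(S)$, and then transport $X$ to $X_0$ by a gradient-Hamiltonian flow. For the algebraic step, I would filter $R = \bigoplus_m L^m$ by $\Z^n$-degree, setting $F_{\geq\alpha} L^m = \{f \in L^m : \nu(f/h^m)\geq\alpha\}\cup\{0\}$. The one-dimensional-leaves hypothesis identifies $\gr_\nu R$ with $\C[S]$, and finite generation of $S$ makes the corresponding Rees algebra flat and finitely generated over $\C[t]$. To realize the degeneration geometrically, pick a basis $\{f_i\}$ of $L$ with distinct values $\alpha_i = \nu(f_i/h)$ and refine them to integer weights $w_i$ via a linear functional compatible with the order. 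Then the one-parameter subgroup $\lambda(t) = \mathrm{diag}(t^{w_1},\dots,t^{w_N})$ of $\gl(L)$ acts on $\P(L^*)$, and the closure of $\{(\lambda(t)\cdot\Phi_\calL(x),t) : x \in X,\ t \in \C^*\}$ in $\P(L^*)\times\C$ is the desired $\mathfrak{X}$, with $\pi^{-1}(0) = \Proj\C[S] = X_0$.

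Pulling back $\omega_{FS}$ from $\P(L^*)$ along the inclusion produces the K\"ahler form $\tilde\omega$ on the smooth part of $\mathfrak{X}$; by construction $(\pi^{-1}(1),\tilde\omega) \cong (X,\omega)$. The residual $T^n$-action on $X_0 \subset \P(L^*)$ has weights given by the $\alpha_i$, so the standard moment-map formula for projective toric subvarieties identifies the closure of the moment image with the convex hull of $\bigcup_{m>0}\{\alpha/m : (m,\alpha)\in S\}$, which is exactly $\Delta(S)$. Since torus-regular points of a toric variety are automatically smooth, the preimage of $\inter \Delta(S)$ is contained in $U_0$.

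To construct $\phi$, I would use that the $S^1$-action $\lambda(e^{i\theta})$ is Hamiltonian for $\tilde\omega$; its K\"ahler gradient flow carries $\pi^{-1}(1)$ along decreasing $|t|$ to $\pi^{-1}(0) = X_0$ and is symplectic in the directions transverse to the orbits, i.e.\ it performs a symplectic parallel transport along fibers of $\pi$. On the smooth locus this produces a symplectomorphism $\phi^{-1}(U_0) \to U_0$; compactness of $X$ forces the flow to extend continuously to a surjection $\phi : X \to X_0$. When $X_0$ is smooth, $U_0 = X_0$, so $\phi$ is a global symplectomorphism onto the Delzant manifold $X_{\Delta(S)}$.

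The hard part will be controlling this gradient-Hamiltonian flow near the singular locus of $\mathfrak{X}$: one must show that the flow lines extend continuously (rather than escape to infinity) and that the resulting limit map is a genuine symplectomorphism on $\phi^{-1}(U_0)$, not merely a continuous measure-preserving map on a dense open subset. A stratification argument separating the smooth flow lines from those approaching $X_0 \setminus U_0$, together with careful verification that $\tilde\omega$ extends across the singular strata of $\mathfrak{X}$ as a closed positive current with well-defined pullback under the flow, is the technical heart of the Harada--Kaveh argument.
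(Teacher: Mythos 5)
The paper does not prove this theorem; it is stated as a rephrasing of results cited directly from Harada--Kaveh \cite{HK} (``In the following theorem we rephrase several results from \cite{HK}''), so there is no in-paper argument to compare against. Your sketch does, however, faithfully reproduce the high-level architecture of the Harada--Kaveh proof: the filtration of $R=\bigoplus_m L^m$ by $\nu$, the identification $\gr_\nu R\cong\C[S]$ via the one-dimensional-leaves hypothesis, the Rees-algebra/Anderson construction of a flat family realized inside $\P(L^*)\times\C$, the pullback K\"ahler form, and the gradient-Hamiltonian flow transporting the generic fiber to the central one.

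One technical point in your description of $\phi$ deserves a correction. The map $\phi$ is \emph{not} obtained from the $S^1$-action $\lambda(e^{i\theta})$: that action rotates within the level sets $|t|=\mathrm{const}$ and does not move $\pi^{-1}(1)$ toward $\pi^{-1}(0)$. What Harada--Kaveh (following Ruan and Nishinou--Nohara--Ueda) use is the \emph{gradient-Hamiltonian vector field} $V=-\nabla(\operatorname{Re}\pi)/|\nabla(\operatorname{Re}\pi)|^2$, built from the Riemannian gradient of $\operatorname{Re}\pi$ for the K\"ahler metric on the smooth locus of $\mathfrak{X}$; the K\"ahler identity $\nabla(\operatorname{Re}\pi)=-J\,X_{\operatorname{Im}\pi}$ makes the time-$1$ flow of $V$ a symplectomorphism between (regular parts of) fibers. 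Your phrasing conflates the rotational $S^1$-symmetry with this radial flow; they are related through $J$ but are different vector fields, and it is the latter that furnishes the symplectic parallel transport and hence $\phi$. Your final paragraph correctly identifies the genuine analytic difficulty, namely controlling this flow near the singular locus of $\mathfrak{X}$ and showing that the limit extends continuously to all of $X$ and is symplectic on $\phi^{-1}(U_0)$.
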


Checking whether $S$ is finitely generated is a very difficult problem.
However, it was observed by Kaveh in \cite{K} that even if $S$ is not finitely generated one can still form a (not flat) family with generic fiber $X$ and special fiber $(\C^*)^n$. Even though such a construction provides much less information about $X$, it still suffices for the purpose of finding lower bounds on the Gromov width. We describe this idea in Section \ref{sec gw}.

\section{Gromov width}\label{sec gw}
The {\it Gromov width} of a $2n$-dimensional symplectic manifold $(X,\omega)$
is the supremum of the set of the positive real numbers $a$ such that the ball of capacity $a$ (or, equivalently, of radius $\sqrt{\frac{a}{\pi}}$),
$$ B^{2n}_a =B^{2n} \Big(\sqrt{\frac{a}{\pi}}\Big)= \big \{ (x_1,y_1,\ldots,x_n,y_n) \in  \R^{2n} \ \Big | \ \pi \sum_{i=1}^n (x_i^2+y_i^2) < a \big 
\} \subset  (\R^{2n}, \omega_{st}), $$
 can be symplectically
embedded in $(X,\omega)$. Here $\omega_{st}=\sum_j\,dx_j \wedge dy_j$ denotes the standard symplectic form on $\R^{2n}$. 
This question was motivated by the Gromov non-squeezing theorem which states that a ball $B^{2n}(r) \subset (\R^{2n}, \omega_{st})$ cannot be symplectically embedded into $B^2(R) \times \R^{2n-2} \subset (\R^{2n}, \omega_{st})$ unless $r\leq R$. 

$J$-holomorphic curves give obstructions to ball embeddings, while Hamiltonian torus actions can lead to constructions of such embeddings (by extending a Darboux chart using the flow of the vector field induced by the action). 

This is why toric degenerations provide a useful tool for finding lower bounds on the Gromov width. Given a toric degeneration of $(X,\omega)$, as described in Theorem \ref{from hk}, one can use the toric action on $X_0$ to construct embeddings of balls into a smooth symplectic toric manifold $(U_0, \tilde{\omega}_{|U_0})$, where $U_0=(X_0)_{\textrm{smooth}}$. Postcomposing such embedding with the symplectomorphism $\phi^{-1}$ produces a symplectic embedding into $(X,\omega)$.

Moreover, many embeddings of balls into symplectic toric manifolds can be read off from the associated (by the Delzant classification theorem) polytope. Identify the dual of the Lie algebra of the compact torus $T$ with the Euclidean space using the convention that $S^1=\R/\Z$, i.e., the lattice of $\mathfrak{t}^*$ is mapped to $\Z^{\dim T} \subset \R^{\dim T}$. With this convention, the moment map for the standard $(S^1)^n$ action on $(\R^{2n}, \omega_{st})$ maps $ B^{2n}_a$ onto an $n$-dimensional simplex of size $a$, closed on $n$ sides
\begin{equation}\label{simplex}
\mathfrak S^n(a):=\{(x_1,\ldots,x_n) \in \R^n|\ 0\leq x_j< a,\  \sum_{j=1}^n x_j< a\}.
\end{equation}
Moreover, if the moment image contains an open simplex of size $a$, then for any $\varepsilon >0$ a ball of capacity $a - \varepsilon$ can be embedded into the given symplectic toric manifold.
\begin{proposition}\cite[Proposition 1.3]{LuSC}\cite[Proposition 2.5]{P}\label{embedding}
For any connected, proper (not necessarily compact) symplectic toric manifold  $U$ of dimension $2n$, with a moment map $\mu$, the Gromov width of $U$ is at least
$$\sup \{a>0\,|\, \exists \; \Psi \in \gl(n,\Z), x \in
\R^n,\textrm{ such that }
\Psi (\inter \mathfrak S^n(a))+\,x \subset \mu(U) \}.
$$
\end{proposition}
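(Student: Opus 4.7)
The plan is to show that for every $a$ satisfying the hypothesis and every $\varepsilon\in(0,a)$, one can symplectically embed the ball $B^{2n}_{a-\varepsilon}$ into $(U,\omega)$; the stated lower bound on the Gromov width then follows from its definition.

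\emph{Normalization.} First, I reduce to a standard position. An automorphism $\Psi\in\gl(n,\Z)$ of the lattice lifts to an equivariant symplectomorphism of $U$ whose effect on the moment map is precomposition with $\Psi^{-1}$, and translating $\mu$ by a constant does not affect $\omega$. These operations allow me to assume $\Psi=\Id$ and $x=0$, so that $\inter\mathfrak S^n(a)\subset \mu(U)$. For a parameter $\delta\in(0,\varepsilon/n)$, the shifted closed simplex $\Delta_\varepsilon:=(\delta,\ldots,\delta)+\overline{\mathfrak S^n(a-\varepsilon)}$ then lies in $\inter\mathfrak S^n(a)\subset \mu(U)$. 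On the model side I equip $B^{2n}_{a-\varepsilon}\subset(\C^n,\omega_{st})$ with the standard $T^n$-action and translated moment map $\mu_0(z):=(\pi|z_1|^2,\ldots,\pi|z_n|^2)+(\delta,\ldots,\delta)$, whose image is exactly $\Delta_\varepsilon$.

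\emph{Matching the free-orbit loci.} Next, I identify $U$ with the ball model over the interior of $\Delta_\varepsilon$. Both $\mu^{-1}(\inter\Delta_\varepsilon)\subset U$ and $\mu_0^{-1}(\inter\Delta_\varepsilon)=B^{2n}_{a-\varepsilon}\ssminus\{z_1\cdots z_n=0\}$ are principal $T^n$-bundles over the contractible simplex $\inter\Delta_\varepsilon$, and action-angle coordinates identify each of them equivariantly and symplectically with $(T^n\times\inter\Delta_\varepsilon,\ \sum dr_j\wedge d\theta_j)$. Composing these two identifications yields an equivariant symplectomorphism $\Phi_0$ from $B^{2n}_{a-\varepsilon}\ssminus\{z_1\cdots z_n=0\}$ onto the open invariant set $\mu^{-1}(\inter\Delta_\varepsilon)\subset U$.

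\emph{Extending across the coordinate hyperplanes.} This is the main obstacle. The map $\Phi_0$ cannot be extended $T^n$-equivariantly: the coordinate hyperplanes (and in particular the origin) in $B^{2n}_{a-\varepsilon}$ carry nontrivial stabilizers, whereas the $T^n$-action on $U$ is free throughout $\mu^{-1}(\Delta_\varepsilon)$, since $\Delta_\varepsilon$ is interior to the moment polytope of $U$. I plan to extend $\Phi_0$ to a \emph{non-equivariant} symplectic embedding of the closed ball by a local construction: near each point of the preimage in $B^{2n}_{a-\varepsilon}$ of a boundary face of $\Delta_\varepsilon$, the standard toric model on $\C^n$ provides one local symplectic chart while the free $T^n$-action on $U$ provides another via Darboux, and these two charts can be matched by a symplectic isotopy supported in an arbitrarily small tubular neighborhood of the coordinate hyperplanes. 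The resulting smooth symplectic embedding $\Phi\colon B^{2n}_{a-\varepsilon}\hookrightarrow U$ gives Gromov width at least $a-\varepsilon$ for every $\varepsilon>0$, which yields the claim after taking the supremum over admissible $a$.
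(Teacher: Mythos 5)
The normalization step and the action--angle identification over the shrunken translated simplex are fine, and you have correctly located the genuine difficulty in the last step. However, the proposed resolution of that difficulty --- matching local Darboux charts by a compactly supported symplectic isotopy near the coordinate hyperplanes --- does not work, and this is not a mere gap in detail but a conceptual problem.

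The issue is that the equivariant map $\Phi_0$ does not extend even \emph{continuously} across the coordinate hyperplanes, and no local modification can repair this. Concretely, fix a point $z_0\in B^{2n}_{a-\varepsilon}$ with $z_{0,n}=0$ and the other coordinates nonzero, and consider the small circles $C_\rho=\{z_0'+\rho e^{i\theta}e_n\}$ shrinking to $z_0$ as $\rho\to 0$. Since $\Phi_0$ intertwines the $T^n$-actions, it sends each $C_\rho$ onto a full circle orbit of the $n$-th $S^1$-factor inside $\mu^{-1}(\inter\Delta_\varepsilon)\cong T^n\times\inter\Delta_\varepsilon$. These image circles represent a fixed nontrivial class in $\pi_1\bigl(\mu^{-1}(\inter\Delta_\varepsilon)\bigr)\cong\Z^n$ and do not shrink as $\rho\to 0$; so $\Phi_0$ wraps contractible loops around non-contractible orbit circles, and a continuous extension to $z_0$ would force these non-contractible loops to converge to a point, a contradiction. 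Any map that agrees with $\Phi_0$ outside a neighborhood $\mathcal N$ of the hyperplanes still sends $\partial\mathcal N\cap\{z'\text{ fixed}\}$ to a loop homotopic to an orbit circle, and for the map to extend over the hyperplanes that loop must bound a disk in the image. Such a disk cannot lie in $\mu^{-1}(\inter\Delta_\varepsilon)$ (wrong $\pi_1$), so it has to escape into the rest of $U$; that is a global construction using parts of $U$ your argument has not touched, not a local Darboux matching. Put differently, after your normalization the entire content of the proposition is precisely the nontrivial assertion that a ball $B^{2n}_{a-\varepsilon}$ embeds symplectically into the \emph{free} toric cylinder $T^n\times\inter\mathfrak S^n(a)$ (equivalently into $B^{2n}_a\ssminus\{z_1\cdots z_n=0\}$), and this is exactly the statement you are trying to prove, not something that follows from Darboux. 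The references given for the proposition ([LuSC], [P], building on Karshon--Tolman) handle this by a genuinely global argument rather than by extending the action--angle chart, and your sketch would need to supply an argument of that kind to be complete.
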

The appearance of $\Psi$ and $x$ comes from the facts that the identification $\mathfrak{t}^*\cong \R^{\dim \ T}$ depends on a splitting of $T$ into $(\dim \ T)$ circles, and that a translation of a moment map also provides a moment map.

The above results lead to the following method for finding lower bounds on the Gromov width.
\begin{corollary}\label{cor gw tool}
Let $X$ be a smooth projective variety of complex dimension $n$, $\calL$ an ample line bundle on $X$, and $\omega=\Phi_\calL^*(\omega_{FS}) \in \hh^2(X; \Z)$ an integral K\"ahler form obtained using the Kodaira embedding $\Phi_\calL \colon X \rightarrow \P (L^*)$. Suppose that there exists a valuation $\nu$ giving a finitely generated and saturated semigroup $S=S(\nu, \calL)$.
 Let $\Delta$ be the associated Okounkov body. 
The Gromov width of $(X,\omega)$ is at least 
$$\sup \{a>0\,|\, \exists \; \Psi \in \gl(n,\Z), x \in
\R^n,\textrm{ such that }
\Psi (\inter\mathfrak S^n(a))+\,x \subset \Delta \}.
$$ 
\end{corollary}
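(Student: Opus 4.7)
The plan is to combine Theorem \ref{from hk} with Proposition \ref{embedding} and reduce the claim about $(X,\omega)$ to a claim about the smooth part of the central fibre of the degeneration. First, I would invoke Theorem \ref{from hk} using the given valuation $\nu$: since $S$ is finitely generated and saturated, the special fibre $X_0=\Proj\,\C[S]$ is a normal toric variety, and the theorem yields a K\"ahler degeneration whose generic fibre is symplectomorphic to $(X,\omega)$, a (not necessarily compact) smooth symplectic toric manifold $(U_0,\tilde\omega|_{U_0})$ with $U_0=(X_0)_{\textrm{smooth}}$ whose moment image has closure equal to the Okounkov body $\Delta$, and a surjective continuous map $\phi\colon X\to X_0$ restricting to a symplectomorphism $(\phi^{-1}(U_0),\omega)\to(U_0,\tilde\omega|_{U_0})$.

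Next, fix $a>0$ together with $\Psi\in\gl(n,\Z)$ and $x\in\R^n$ such that
\[
\Psi(\inter\mathfrak S^n(a))+x\subset\Delta.
\]
The set on the left is open in $\R^n$, and an open subset of a convex body $\Delta\subset\R^n$ is automatically contained in $\inter\Delta$, since any point of $\partial\Delta$ has no $\R^n$-open neighbourhood lying inside $\Delta$. Theorem \ref{from hk} guarantees that $U_0$ contains the $\mu$-preimage of $\inter\Delta$, so $\inter\Delta\subset \mu(U_0)$ and in particular $\Psi(\inter\mathfrak S^n(a))+x\subset\mu(U_0)$. Applying Proposition \ref{embedding} to the proper symplectic toric manifold $(U_0,\tilde\omega|_{U_0})$ then produces, for every $\varepsilon>0$, a symplectic embedding of $B^{2n}_{a-\varepsilon}$ into $(U_0,\tilde\omega|_{U_0})$.

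Finally, postcomposing such an embedding with the symplectomorphism $(\phi|_{\phi^{-1}(U_0)})^{-1}\colon(U_0,\tilde\omega|_{U_0})\to(\phi^{-1}(U_0),\omega)\hookrightarrow(X,\omega)$ gives a symplectic embedding of $B^{2n}_{a-\varepsilon}$ into $(X,\omega)$. Letting $\varepsilon\to 0$ and taking the supremum over all admissible $a$ yields the stated lower bound for the Gromov width.

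All the heavy lifting (existence of the degeneration, identification of the moment image of $(U_0,\tilde\omega|_{U_0})$ with $\Delta$, and construction of the semi-global symplectomorphism $\phi$) is already packaged in Theorem \ref{from hk}, so the corollary is essentially a formal consequence. The only point where one has to be slightly careful—and the main conceptual obstacle—is that Proposition \ref{embedding} requires the simplex to sit inside $\mu(U_0)$, whereas the hypothesis only places it inside $\Delta$; $U_0$ may miss boundary strata of $X_0$. The openness-in-a-convex-set observation above is exactly what bridges this gap.
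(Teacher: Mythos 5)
Your argument is the same as the paper's, just spelled out in more detail: both invoke Theorem \ref{from hk} to produce the normal toric central fibre $X_0$, use that the smooth locus $U_0$ has moment image containing $\operatorname{Int}\Delta$ (you justify this via the openness-in-a-convex-body observation, the paper via the remark that singular points are sent to the boundary), apply Proposition \ref{embedding}, and transport the ball embeddings back to $X$ via $\phi^{-1}$. Correct and essentially identical in approach.
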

\begin{proof}
By the result of \cite{HK} cited above as Theorem \ref{from hk}, there exists a toric degeneration of $(X, \omega)$ to a normal toric variety $X_0=\Proj \C[S]$. The subset $U:=\phi^{-1}(U_0)$ of $X$ inherits a toric action whose moment image contains $\inter\, \Delta$, the interior of $\Delta$ (recall that a moment map sends singular points of a toric variety to the boundary of the moment polytope). The corollary follows from Proposition \ref{embedding}.
\end{proof}
In fact one does not need $S$ to be saturated. The same corollary holds even if $X_0$ is not a normal toric variety. This is because a normalization map for $X_0$ induces a biholomorphism between $(X_0)_{\textrm{smooth}}$ and an appropriate subset of the normalization of $X_0$.

It is, however, necessary that $S$ is finitely generated for a toric degeneration to exist. Otherwise one can still form a family of manifolds, but one cannot guarantee for this family to be flat, and thus $X$ and $X_0$ are no longer so strongly related. As we already mentioned, Kaveh in \cite{K} observed that such a (not necessarily flat) family, with $X_0=(\C^*)^n$, still provides information about the Gromov width of $(X,\omega)$.
To state this result we need additional notation. In the notation of Section \ref{sec toric deg}, for any $m\in\Z_{>0}$ let 
$$\calA_m:=\{\nu(f/h^m)\,|\,f \in L^m \setminus \{0\}\,\}\subset \Z^n, \quad \Delta_m=\frac{1}{m} \, \conv (\calA_m).$$
Note that $\Delta=\overline{\cup_{m>0} \Delta_m}$.
Fix $m$ and let $r=r_m$ denote the number of elements in $\calA_m=\{\beta_1, \ldots, \beta_r\}$.
From these data we form a symplectic form, $\omega_{m}$, on $(\C^*)^n$ using a standard procedure: $\omega_{m}$ is the pull back of the Fubini--Study form on $\C\P^{r-1}$ via the map
$\Psi_m \colon (\C^*)^n \rightarrow \C\P^{r-1}$, $u \mapsto (u^{\beta_1}c_1, \ldots,u^{\beta_r}c_r )$, where  $c=[(c_1,\ldots,c_r)]$ is some element in $\C\P^{r-1}$ with all $c_i\neq 0$. (In \cite{K} the elements $c_i$ come from coefficients of leading terms of elements in an appropriately chosen basis of $L^m$. One also needs that the differences of elements in $\calA_m$ span $\Z^n$ which, by \cite[Remark 5.6]{K}, is always true for lowest term valuations.)

Kaveh proved that
\begin{itemize}
\item For every $m>0$ there exists an open subset $U \subset X$ such that $(U, \omega)$ is symplectomorphic to $((\C^*)^n,  \frac{1}{m}\omega_{m})$ (\cite[Theorem 10.5]{K}).
\item The Gromov width of $((\C^*)^n,\frac{1}{m}\omega_{m})$ is at least $R_m$, where $R_m$ is the size of the largest open simplex that fits in the interior of $\Delta_m=\frac{1}{m}\conv\,(\calA_m)$(\cite[Corollary 12.3]{K}).
\end{itemize}
This leads to the following corollary.
\begin{corollary}\cite[Corollary 12.4]{K}\label{cor k gw}
Let $X$ be a smooth projective variety of dimension $n$, $\calL$ an ample line bundle on $X$, and  $\omega=\Phi_\calL^*(\omega_{FS}) \in H^2(X; \Z)$ an integral K\"ahler form. Let $\nu$ be a lowest term valuation on $\C(X)$, with values in $\Z^n$, and $\Delta$ the associated Okounkov body.
The Gromov width of $(X,\omega)$ is at least $R$, where $R$ is the size of the largest open simplex that fits in the interior of $\Delta$.
\end{corollary}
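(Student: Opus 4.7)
The strategy is to combine the two results of Kaveh cited above with an approximation argument relating open simplices in $\inter\Delta$ to open simplices in the interiors of the intermediate polytopes $\Delta_m$. The key point is that although $\Delta$ is only the \emph{closure} of $\bigcup_{m>0}\Delta_m$, every open simplex strictly inside $\Delta$ already sits inside some $\Delta_m$ once $m$ is sufficiently large; one then applies Kaveh's two bullets for that value of $m$ and lets the approximation error tend to zero.

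First I would establish monotonicity along a cofinal subsequence of the $\Delta_m$. Since $L^m\cdot L^{m'}\subseteq L^{m+m'}$ and $\nu$ is additive under multiplication, one has $\calA_m+\calA_{m'}\subseteq\calA_{m+m'}$. From this, $\Delta_m\subseteq\Delta_{km}$ for every $k\geq 1$: writing $x\in\Delta_m$ as $x=\frac{1}{m}\sum_i t_i\alpha_i$ with $\alpha_i=\nu(f_i/h^m)\in\calA_m$, one has $(km)x=\sum_i t_i(k\alpha_i)$ and $k\alpha_i=\nu(f_i^k/h^{km})\in\calA_{km}$. Thus $(\Delta_{m!})_{m\geq 1}$ is an increasing nested sequence of compact convex polytopes whose union is dense in $\Delta$. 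A standard convex-geometry argument then yields $\inter\Delta=\bigcup_m\inter\Delta_{m!}$: given $x\in\inter\Delta$, choose $n+1$ affinely independent points in a small neighborhood of $x$ whose convex hull contains $x$ in its interior, approximate each by elements of $\bigcup_m\Delta_{m!}$ using density, and observe that for a small enough perturbation $x$ remains in the interior of the resulting simplex, which itself lies inside $\Delta_{m!}$ for $m$ large enough. By compactness, any compact subset of $\inter\Delta$ is contained in some $\inter\Delta_{m!}$.

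To conclude, fix $\varepsilon>0$. By definition of $R$, there is an open simplex (up to a $\gl(n,\Z)$ transformation and translation) of size $R-\varepsilon/2$ sitting in $\inter\Delta$; shrinking it slightly about an interior point produces an open simplex $\Sigma$ of size $R-\varepsilon$ whose closure lies in $\inter\Delta$. By the preceding step, $\overline{\Sigma}\subseteq\inter\Delta_{m!}$ for some finite $m$, so $R_{m!}\geq R-\varepsilon$. Kaveh's second bullet then gives the Gromov width of $((\C^*)^n,\frac{1}{m!}\omega_{m!})$ to be at least $R_{m!}$, and his first bullet identifies this open manifold symplectically with an open subset of $(X,\omega)$. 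Consequently, balls of capacity arbitrarily close to $R-\varepsilon$ symplectically embed in $(X,\omega)$; letting $\varepsilon\to 0$ yields that the Gromov width of $(X,\omega)$ is at least $R$.

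The main obstacle is the convex-geometric claim that the interior of the closure of a nested union of compact convex sets equals the union of their interiors --- this is not automatic, but the affinely-independent-perturbation argument sketched above settles it. Once it is in hand, the rest of the proof is a mechanical assembly of Kaveh's two bullets together with the monotonicity observation, which follows cleanly from the multiplicative structure of $\bigoplus_m L^m$ and the additivity of $\nu$.
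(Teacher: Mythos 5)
Your proof is correct and spells out precisely the argument the paper leaves implicit when it says Kaveh's two bullets ``lead to'' the corollary: you establish monotonicity $\Delta_m \subseteq \Delta_{km}$ from multiplicativity of the valuation, pass to the nested subsequence $\Delta_{m!}$, prove $\inter\Delta = \bigcup_m \inter\Delta_{m!}$ by the affine-independence perturbation argument, and then apply the two cited results of Kaveh for sufficiently large $m$. This is the same route the paper (and Kaveh's Corollary 12.4) intends, and the convex-geometric lemma you isolate is indeed the one step requiring care, which you handle correctly.
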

\subsection{Results about coadjoint orbits.}
The methods for finding the Gromov width described in Corollaries \ref{cor gw tool} and \ref{cor k gw} have been used in \cite{HP} and \cite{FLP} for coadjoint orbits of compact Lie groups.

Recall that given a compact Lie group $K$ each orbit 
$\mathcal{O}\subset \mathfrak k^*:=(\textrm{Lie}\,K)^*$ of the coadjoint action of $K$ on  $\mathfrak k^*$
is naturally a symplectic manifold. Namely it can be equipped with the Kostant--Kirillov--Souriau symplectic form $\omega^{KKS}$ defined by:
$$\omega^{KKS}_{\xi}(X^\#,Y^\#)=\langle \xi, [X,Y]\rangle,\;\;\;\xi \in \mathcal{O} \subset \mathfrak k^*,\;X,Y \in \mathfrak k,$$
where $X^\#,Y^\#$ are the vector fields on $\mathfrak k^*$ induced by $X,Y \in \mathfrak k$ via the coadjoint action of $K$. 
Coadjoint orbits are in bijection with points in a positive Weyl chamber as every coadjoint orbit intersects such a chamber in a single point. 
An orbit is called generic (resp. degenerate) if this intersection point is an interior point of the chamber (resp. a boundary point). An orbit passing through a point $\lambda$ in a positive Weyl chamber will be denoted by $\mathcal{O}_{\lambda}$.
For example, when $K=\un(n,\C)$ is the unitary group, a coadjoint orbit can be identified with the set of Hermitian matrices with a fixed set of eigenvalues. The orbit is generic if all eigenvalues are different, and in this case it is diffeomorphic to the manifold of complete flags in $\C^n$.

It has been conjectured that the Gromov width of $(\mathcal{O}_{\lambda}, \omega^{KKS})$ should be given by the following neat formula, expressed entirely in the Lie-theoretical language
$$\min\{\, \left|\left\langle \lambda,\alpha^{\vee} \right\rangle \right|;\  \alpha^{\vee} \textrm{ a  coroot and }\left\langle \lambda,\alpha^{\vee} \right\rangle \neq0\}.$$
For example, as $\{e_{ii}-e_{jj};\, i\neq j\}$ forms a root system for the unitary group $\un (n, \C)$, the Gromov width of its coadjoint orbit $\mathcal{O}_\lambda$ passing through a point $\lambda=diag\,(\lambda_1,\ldots,\lambda_n)\in \mathfrak{u}(n)^*$, lying on some rational line, is equal to 
$$\min \{|\lambda_i-\lambda_j|;\ i,j \in \{1,\ldots, n\},\ \lambda_i\neq \lambda_j\}.$$
Here we identified both $\mathfrak{u}(n)$ and $\mathfrak{u}(n)^*$ with the set of $n \times n$ Hermitian matrices.

This conjecture was motivated by the computation of the Gromov width of complex Grassmannians, i.e., degenerate coadjoint orbits of $\un(n,\C)$, done by Karshon and Tolman in \cite{KT}, and independently by Lu in \cite{LuGW}. Later, using holomorphic techniques, Zoghi (for generic indecomposable\footnote{A coadjoint orbit through a point $\lambda$ in the interior of a chosen positive Weyl chamber is called  indecomposable in \cite{Z} if there exists a simple positive root $\alpha$ such that for any positive root $\alpha'$ there exists a positive integer $k$ such that $\langle \lambda, \alpha' \rangle=k \langle \lambda, \alpha \rangle$. }  orbits of $\un(n,\C)$, \cite{Z}) and Caviedes (for any coadjoint orbit, \cite{CC}) showed that the above formula provides an upper bound for the Gromov width. The fact that this formula also provides a lower bound was proved using explicit Hamiltonian torus actions by Zoghi (for generic indecomposable orbits of $\un(n,\C)$ using the standard action of the maximal torus, \cite{Z}), Lane (for generic orbits of the exceptional group $\textrm{G}_2$, \cite{La}), and the author (for $\un(n,\C)$, $\so(2n,\C)$ and $\so(2n+1,\C)$ orbits
\footnote{The result about $\so(2n+1,\C)$ holds only for orbits satisfying a mild technical condition: the point $\lambda$ of intersection of the orbit and a chosen positive Weyl chamber should not belong to a certain subset of one wall of the chamber; see \cite{P} for more details. In particular, all generic orbits satisfy this condition.}
 using the Gelfand--Tsetlin torus action, \cite{P}).

\subsection{A sketch of the proof of Theorem \ref{thm gw}}
The first usage of toric degenerations in Gromov width problems appeared in the work \cite{HP} of Halacheva and the author, where the generic orbits of the symplectic group $\Sp(n)=\un(n,\mathbb{H})$ are considered. Then it was used in \cite{FLP} to prove that the formula \eqref{gw formula} is a lower bound for the Gromov width of any coadjoint orbit of any compact connected simple Lie group $K$, passing through a point in the Weyl chamber lying on some rational line, i.e., to prove Theorem \ref{thm gw}.

The rationality assumption comes from the fact that the toric degeneration method can be applied only to the orbits passing through an integral point $\lambda$ of a positive Weyl chamber, i.e., in the language of representation theory, through a dominant weight. 

Let $G$ be a simply connected simple complex algebraic group and $K\subset G$ be its maximal compact subgroup.
With a dominant weight $\lambda$ one can associate an irreducible representation $V(\lambda)$ of $G$ of highest weight $\lambda$. Let 
$\mathbb C_{v_\lambda}$ be the highest weight line and $P=P_{\lambda}$ be the 
normalizer in $G$ of this line. Then the coadjoint orbit $\mathcal{O}_{\lambda}$ of $K$ is diffeomorphic to $G/P$ (and to $K/K \cap P$) and there exists a very ample line bundle $\mathcal L_\lambda$ on $G/P$ such that the pull back of the Fubini--Study form on the projective space 
$\mathbb P(\mathrm{H}^0(G/P,\mathcal L_\lambda)^*)=\mathbb P(V(\lambda))$ via the Kodaira embedding $ G/P \rightarrow \mathbb P(\mathrm{H}^0(G/P,\mathcal L_\lambda)^*)$ is exactly the Kostant--Kirillov--Souriau symplectic form $\omega^{KKS}$ on $\mathcal{O}_{\lambda}$ (see for example \cite[Remark 5.5]{CC}). 
Thus for integral $\lambda$'s one can try to construct toric degenerations of projective variety $G/P$ with line bundle $\mathcal L_\lambda$ and obtain some lower bounds for the Gromov width of the orbit $\mathcal{O}_{\lambda}$. Rescaling of symplectic forms allows to extend such a result to orbits $\mathcal{O}_{a\lambda}$, for any $a \in \R_{>0}$.

It remains to discuss how one can construct desired toric degenerations.

A great advantage of working with coadjoint orbits of a complex algebraic group $G$ is that a lot of information can be obtained from studying representations of $G$. This leads to a beautiful interplay between symplectic geometry and representation theory. 
A reduced decomposition of the longest word in the Weyl group, $\underline{w}_0=s_{i_{\alpha_1}} \cdot \ldots \cdot s_{i_{\alpha_N}}$ provides the following items (defined below) related in an interesting way:
\begin{itemize}
\item a valuation $\nu_{\underline{w}_0}$,
\item a string parameterization of a crystal basis of $V_{\lambda}^*$.
\end{itemize}

We continue to denote by $\lambda$ a dominant weight and by $V_{\lambda}$ the finite dimensional irreducible representation of $G$ with highest weight $\lambda$. Let $V_{\lambda}^*$ denote the dual representation. 
One often seeks for a basis of $V_{\lambda}^*$ consisting of elements which behave nicely under the action of Kashiwara operators. A crystal basis is a basis whose elements are permuted under the Kashiwara operators. Given a crystal basis one can form a crystal graph of a given representation: vertices are elements of the crystal basis and $\{0\}$, and edges are labelled by simple roots and correspond to the action of Kashiwara operators. There are (noncanonical) ways of embedding such graph into $\R^{N}$, $N=\dim_\C G/P$. A reduced decomposition of the longest word in the Weyl group (into a composition of reflections with respect to simple roots), $\underline{w}_0=s_{\alpha_1} \cdot \ldots \cdot s_{\alpha_N}$, provides a way of assigning to each vertex of the crystal graph a string of $N$ integers (string parametrization), and thus gives such an embedding. A convex hull of the image of string parametrization is called a string polytope. It depends on $\lambda$ and also on the chosen decomposition $\underline{w}_0$. String polytopes have been extensively studied in representation theory.

Moreover, a reduced decomposition $\underline{w}_0=s_{i_{\alpha_1}} \cdot \ldots \cdot s_{i_{\alpha_N}}$ defines a sequence of Schubert subvarieties
$$[P]=Y_{N} \subset \ldots \subset Y_{0}= G/P,$$
where $Y_j$ denotes the Schubert variety corresponding to the element $s_{i_{\alpha_{j+1}}} \cdot \ldots \cdot s_{i_{\alpha_N}}$ of the Weyl group.
We denote by $\nu_{\underline{w}_0}$ the highest term valuation associated with this flag of subvarieties.

A theorem of Kaveh relates these two objects.
\begin{theorem}\cite{Kcrbas}
The string parametrization for $V_\lambda^*=\hh^0(G/P, \calL_\lambda)$ obtained using the reduced decomposition $\underline{w}_0$ is the restriction of the valuation $\nu_{\underline{w}_0}$ and thus the corresponding string polytope is the Okounkov body $\Delta (\nu_{\underline{w}_0})$.
\end{theorem}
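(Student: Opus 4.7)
The plan is to realize both the string parametrization of a crystal basis vector and the value $\nu_{\underline{w}_0}$ on the corresponding section as iterated orders of vanishing read off the same flag of Schubert varieties, and then to argue by induction on the length $N$ of $\underline{w}_0$.

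First I would fix the geometric setup. Using the Borel--Weil identification $V_\lambda^* \cong \hh^0(G/P, \calL_\lambda)$, a crystal basis vector $b$ corresponds to a section $\sigma_b$. The reduced decomposition $\underline{w}_0=s_{i_{\alpha_1}}\cdots s_{i_{\alpha_N}}$ produces both the Schubert flag $Y_N \subset \cdots \subset Y_0 = G/P$ and, via the one-parameter root subgroups $x_{\beta_j}(t)$ for $\beta_j = s_{i_{\alpha_1}}\cdots s_{i_{\alpha_{j-1}}}(\alpha_{i_j})$, an explicit coordinate system $(u_1,\ldots,u_N)$ on an open neighbourhood of the base point $[P] \in G/P$. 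By construction each $u_j$ vanishes to first order along $Y_j$ inside $Y_{j-1}$, so this is exactly the data required by Example \ref{eg flag val} for computing $\nu_{\underline{w}_0}$ as successive restriction-and-leading-term operations.

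Next I would match the first coordinate with the first string entry. The restriction of $\calL_\lambda$ to a generic $\P^1$-fibre of the projection $Y_0 \to Y_1$ (coming from the minimal parabolic attached to $\alpha_{i_1}$) is the line bundle $\calO(k)$ where $k$ is precisely the length of the $\alpha_{i_1}$-string through $b$ in the crystal. Expanding $\sigma_b$ in the fibre coordinate $u_1$ one obtains a polynomial whose top degree equals this $k$, which by the definition of string parametrization is $k_1$. This identification is essentially an $\mathrm{SL}_2$-computation: the action of the lowering operator $f_{i_1}$ on sections is realised via differentiation with respect to $u_1$, and the number of divided powers before $b$ is annihilated equals the degree of the leading term. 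Thus the first entry of $\nu_{\underline{w}_0}(\sigma_b)$ equals $k_1$.

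Finally I would conclude by induction on $N$. After extracting the leading term in $u_1$ one obtains a section on $Y_1$, which is itself a Schubert variety whose Weyl-group word is $s_{i_{\alpha_2}}\cdots s_{i_{\alpha_N}}$; its space of sections is the corresponding Demazure submodule of $V_\lambda^*$, and the leading coefficient of $\sigma_b$ coincides, up to a nonzero scalar, with the section attached to the crystal basis vector $\tilde{e}_{i_1}^{k_1} b$ on $Y_1$. The inductive hypothesis then identifies the remaining coordinates of $\nu_{\underline{w}_0}(\sigma_b)$ with $(k_2,\ldots,k_N)$. Because $\nu_{\underline{w}_0}$ has one-dimensional leaves, the valuation image of a basis of $\hh^0(G/P, \calL_\lambda^{\otimes m})$ exhausts the lattice points of $m\Delta(\nu_{\underline{w}_0})$, and these lattice points are exactly the string parametrizations appearing in the crystal of $V_{m\lambda}^*$; taking convex hulls yields the equality of the Okounkov body and the string polytope. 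The main obstacle is the inductive bookkeeping: one must verify that the leading coefficient in $u_1$ of $\sigma_b$, viewed as a section on $Y_1$, really does correspond under Borel--Weil to a nonzero scalar multiple of $\tilde{e}_{i_1}^{k_1} b$ in the relevant Demazure submodule attached to the shorter word. This compatibility between Kashiwara's crystal operators and the action of root subgroups on sections of $\calL_\lambda$ is the technical heart of the argument, and it is what ensures that successive leading terms faithfully record the successive string indices.
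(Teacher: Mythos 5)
The paper doesn't prove this theorem --- it quotes it directly from Kaveh's \emph{Crystal bases and Okounkov bodies} \cite{Kcrbas} as an ingredient for the Gromov width argument --- so there is no in-paper proof to compare against. Your sketch does track the actual strategy of \cite{Kcrbas} fairly closely: build local coordinates from the one-parameter root subgroups determined by the reduced word, identify the first valuation entry with the $\alpha_{i_1}$-string length via an $\mathrm{SL}_2$-on-a-$\P^1$-fibre computation, and then induct down the Schubert flag through Demazure submodules. You also correctly pinpoint the technical core, namely the compatibility between Kashiwara operators and the geometric action of root subgroups on sections of $\calL_\lambda$.

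Two caveats if you were to write this out. First, the paper defines $\nu_{\underline{w}_0}$ as a \emph{highest}-term valuation, whereas Example~\ref{eg flag val} spells out only the lowest-term case via orders of vanishing; you need to fix one convention, and also keep in mind that dualizing from $V_\lambda$ to $V_\lambda^*$ interchanges $\tilde{e}_i$ and $\tilde{f}_i$, so if you are not careful the induction records the string lengths in the wrong direction (reading off top degree of $u_1$ counts $\tilde{f}_{i_1}$-applications, not $\tilde{e}_{i_1}$-applications, until you dualize). Second, the jump from ``the valuation equals the string parametrization on $L = \hh^0(G/P,\calL_\lambda)$'' to ``the Okounkov body equals the string polytope'' requires the same matching for every $L^m$ (with $m\lambda$ in place of $\lambda$), together with the fact that the image is exactly the set of lattice points of $m$ times the string polytope; your phrase that the valuation image ``exhausts the lattice points'' is precisely what must be proved, and is where Kaveh has to invoke the combinatorial structure of string cones. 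These are not wrong turns, but they are where the real work in \cite{Kcrbas} lives.
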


Explicit descriptions of string polytopes for classical Lie groups and some well-chosen reduced decompositions of the longest words were presented in the work of Littelmann \cite{Li}. With a bit of work, one can show that the string polytope for $V_\lambda^*$ with $G=\Sp(2n,\C)$ the symplectic group (with maximal compact subgroup $K=\Sp(n)=\textrm{U}(n,\mathbb{H})$), described in \cite{Li}, contains a simplex of size prescribed by \eqref{gw formula}. Then, the result of Kaveh, \cite{Kcrbas}, quoted above together with Corollary \ref{cor gw tool} prove that the Gromov width of $\Sp(n)$ coadjoint orbit $(\mathcal{O}_\lambda,\omega^{KKS})$ is at least equal to the value prescribed by \eqref{gw formula}, i.e., proves Theorem \ref{thm gw} for the case of the symplectic group. This is exactly the argument used in \cite{HP}.

Similar method could be applied for other classical Lie groups. However, one would need to consider each type separately, as the descriptions of string polytopes contained in \cite{Li} depend on reduced decompositions which are different for different group types.

To obtain a unified proof which works for all group types, in \cite{FLP} we use lowest term valuations arising from a system of parameters induced by an enumeration $\{\beta_1, \ldots, \beta_N\}$ of certain positive roots. In \cite{FFL} the authors gave a representation-theoretic description of the associated semigroup, also in the cases where this enumeration does not come from a reduced decomposition of the longest word. Unfortunately it might be very difficult to show that this semigroup is finitely generated and to find an explicit description of the associated Okounkov body. However, in the case when the enumeration is a good ordering in the sense of \cite{FFL}, building on the results from \cite{FFL} one can at least show that the associated Okounkov body contains a simplex of size prescribed by \eqref{gw formula}. Then, using the result of Kaveh \cite{K} cited here as Corollary \ref{cor k gw} (which does not require the semigroup to be finitely generated), one proves Theorem \ref{thm gw}. The details of this argument are presented in \cite{FLP}.
\section{Cohomological rigidity}\label{sec coh rigidity}
The following section is based on a project joint with Sue Tolman, \cite{PT}.

Cohomological rigidity problems are problems where one tries to determine whether integral cohomology ring can distinguish between manifolds of a certain family, and whether all isomorphisms between integral cohomology rings are induced by maps (homeomorphisms or diffeomorphisms, depending on the setting) between manifolds.
Integral cohomology ring is too weak to distinguish a homeomorphism type of a manifold. However, by a result of Freedman, it classifies (up to a homeomorphism) all closed, smooth, simply connected $4$-manifolds. 
Masuda and Suh posed question of whether the cohomological rigidity holds for the family of symplectic toric manifolds. 
The question was studied by its authors, Choi, and Panov. No counterexample was found and partial positive results were proved.
(Interested readers are encouraged to consult the nice survey \cite{CMSsurvey} and references therein.)
Due to the presence of symplectic structure, it seems natural to consider the following symplectic variant of the above question.
\begin{q}(Symplectic cohomological rigidity for symplectic toric manifolds)
\begin{itemize}
\item (weak) Are any two symplectic toric manifolds $(M, \omega_M)$ and $(N, \omega_N)$ necessarily symplectomorphic whenever there exists an isomorphism $F \colon \hh^*(M;\Z) \rightarrow \hh^*(N;\Z)$ sending the class $[\omega_M]$ to $[\omega_N]$?
\item (strong) Is any such
 isomorphism $F \colon \hh^*(M;\Z) \rightarrow \hh^*(N;\Z)$ induced by a symplectomorphism?
\end{itemize}
\end{q}
Sue Tolman and the author, in \cite{PT}, prove that weak and strong symplectic cohomological rigidity hold for the family of Bott manifolds with rational cohomology ring isomorphic to that of a product of copies of $\CP^1$. Bott manifolds can be viewed as higher dimensional generalizations of Hirzebruch surfaces discussed in the example below. For definition see Section \ref{sec bott mfd}.
\begin{rmk}
Strong (not symplectic) cohomological rigidity, with diffeomorphisms, was already proved for this family by Choi and Masuda in \cite{CM}. Their diffeomorphisms usually do not preserve the complex structure. If they had, then our result would be an immediate consequence of theirs. Indeed, if $f \colon N \rightarrow M$ is a biholomorphism inducing $F$, then $\omega_N$ and $f^*(\omega_M)$ are both K\"ahler forms on $N$, defining the same cohomology class in $\hh^*(N;\Z)$, and thus in this case $(N,\omega_N)$ and $(N,f^*(\omega_M))$ are symplectomorphic by Moser's trick.
\end{rmk}
\begin{example}(Hirzebruch surfaces)\label{eg h}
Hirzebruch surfaces are $\CP^1$ bundles over $\CP^1$. As complex manifolds they are classified by integers (encoding the twisting of the bundle): for each $A \in \Z$ we denote by $\mathcal{H}_{-A}$ the bundle $\P (\mathcal{O}(A) \oplus \mathcal{O}(0)) \rightarrow \CP^1$. In particular, $\mathcal{H}_0=\CP^1 \times \CP^1$. They can be equipped with a symplectic (even K\"ahler) structure and a toric action. A polytope corresponding to $\mathcal{H}_{-A}$ in Delzant's classification is (up to $\gl(2,\Z)$ action) a trapezoid with outward normals $(-1,0)$,$(0,-1)$,$(1,0)$,$(A,1)$. The lengths of the edges of this trapezoid depend on the chosen symplectic structure and can be encoded as $\lambda=(\lambda_1,\lambda_2)\in (\R_{>0})^2$. 
We denote by $(\mathcal{H}_{-A},\omega_\lambda )$ the symplectic toric manifold corresponding to the trapezoid  $\Delta (A, \lambda):=\conv ((0,0),(\lambda_1,0), (\lambda_1,\lambda_2-A\lambda_1),(0,\lambda_2))$. For example, Figure \ref{fig Hirzebruch} presents $(\mathcal{H}_0,\omega_{(1,3)} )$ and $(\mathcal{H}_{-2},\omega_{(1,5)} )$.
\begin{figure}[h]
\includegraphics[scale=0.5]{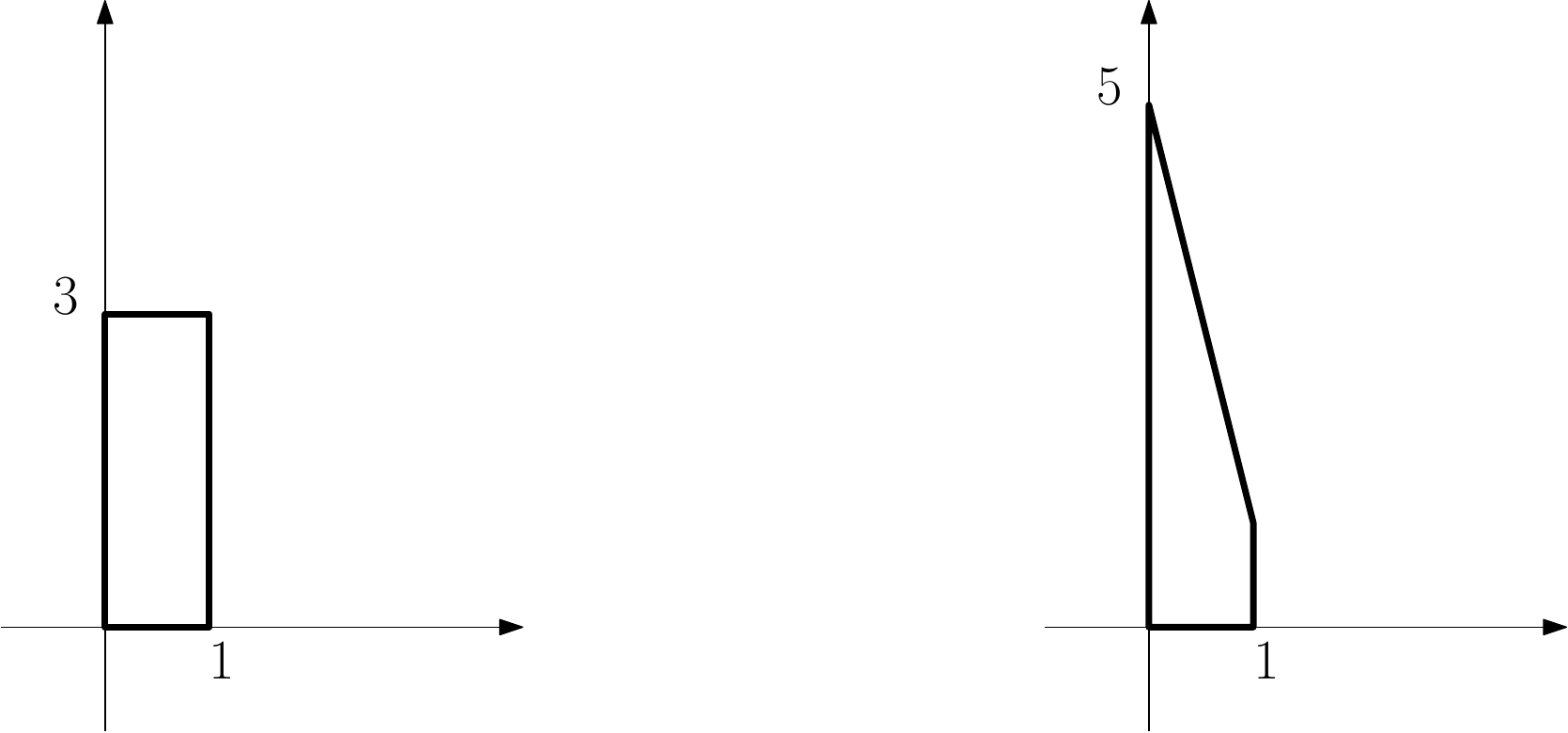}
\caption{Hirzebruch surfaces $(\mathcal{H}_0,\omega_{(1,3)} )$ and $(\mathcal{H}_{-2},\omega_{(1,5)} )$.}
\label{fig Hirzebruch}
\end{figure}

It was observed by Hirzebruch that $\mathcal{H}_{-A}$ and $\mathcal{H}_{-\tA}$ are diffeomorphic if and only if $A \cong \tA \mod 2$. 
Moreover, the symplectic toric manifolds $(\mathcal{H}_{-A},\omega_\lambda )$ and $(\mathcal{H}_{-\tA},\omega_\tlambda )$ are (not equivariantly) symplectomorphic if and only if $A \cong \tA \mod 2$ and the widths and the areas of the associated polytopes agree, i.e., $\lambda_1=\tlambda_1$ and 
$ \lambda_2-\frac{1}{2}A\lambda_1 =\tlambda_2-\frac{1}{2}\tA\tlambda_1$. 
For example, the manifolds presented on Figure \ref{fig Hirzebruch} are symplectomorphic.
The cohomology ring can be presented as
$$\hh^*(\mathcal{H}_{-A}; \Z)=\Z[x_1,x_2]/\langle x_2^2, x_1^2+Ax_1x_2\rangle,$$
with $[\omega_\lambda]=\lambda_1 x_1 +\lambda_2 x_2$.
If $A \cong \tA \mod 2$, then the isomorphism $\Z[x_1,x_2] \rightarrow \Z[\tx_1,\tx_2]$ defined by  $x_1 \mapsto \tx_1 +\frac{1}{2}(\tA-A)\tx_2$, $x_2 \mapsto \tx_2$ descends to an isomorphism between $\hh^*(\mathcal{H}_{-A}; \Z)$ and $\hh^*(\mathcal{H}_{-\tA}; \Z)$. Note that this isomorphism sends $[\omega_\lambda]=\lambda_1 x_1 +\lambda_2 x_2$ to $\lambda_1 \tx_1+(\lambda_2 + \frac{\tA-A}{2}\lambda_1)\,\tx_2 $ which is equal to $[\omega_\tlambda]$ if and only if $\lambda_1=\tlambda_1$ and 
 $\lambda_2-\frac{A}{2}\lambda_1 =\tlambda_2-\frac{\tA}{2}\tlambda_1$.
 Therefore, for Hirzebruch surfaces (weak) symplectic cohomological rigidity holds.
\end{example}

To approach the symplectic cohomological rigidity problem one needs a good method of constructing symplectomorphisms. Here is where toric degenerations come into play. By Theorem \ref{from hk} a toric degeneration whose central fiber $\Proj\, \C[S]$ is smooth produces a symplectomorphism between the symplectic manifold one started with and the central fiber. The main difficulty in this method of constructing symplectomorphisms lies in finding toric degenerations with smooth central fibers.

A great advantage of working with toric manifolds is that the sections of their line bundles are well understood and one can form very concrete constructions of toric degenerations. 

\subsection{Toric degenerations for symplectic toric manifolds}
Let $(X_P,\omega_P)$ be a symplectic toric manifold with $\omega_P \in \hh^2(M; \Z)$, corresponding to a polytope $P \subset \R^n$ via Delzant's construction. Then $P$ is an integral polytope (i.e. with vertices in $\Z^n$) and there exists a very ample line bundle $\calL$ over $X_P$ inducing $\omega_P$. In this situation a basis of the space of holomorphic sections of $\calL$ can be identified with the integral points of $P$
(\cite{D}, see also \cite{H}).
Without loss of generality we can assume that in a neighborhood of some vertex $P$ looks like $(\R_{\geq 0})^n$ in a neighborhood of the origin in $\R^n$. Then we can identify $L=\hh^0(X_P, \calL)$ with a subset of the ring of rational functions, $\C(X_P)$, 
as described on page \pageref{identification},
 using the section corresponding to the origin as the fixed element $h$:
$$f \mapsto \frac{f}{\textrm{section corresponding to the origin} }.$$

{\bf Notation.} 
For simplicity of notation, given a valuation $\nu$ we will write $\nu(L)$ to denote 
$$\nu(L):=\{\nu(f/h);\ f \in L \setminus \{0\}\}.$$ 
Similarly, let
$\nu(L^m):=\{\nu(f/h^m);\ f \in L^{m} \setminus \{0\}\}$ for any $m >1.$

We denote by $f_j \in \C(X_P)$ the rational function 
coming from the section corresponding to the $j$-th basis vector, $j=1, \ldots, n$. 
Note that $f_1,\ldots,f_n$ form a coordinate system around the fixed point of $X_P$ corresponding to the origin via the moment map. (To see this, one can, for example, use the description of $X_P$ and $f_j$'s from \cite{H}.)

Choose and fix a non-negative integer $c$ and two elements $k<l \in \{1,\dots,n\}$. Then 
\begin{equation}
\label{coor sys}
\begin{split}
u_1&=f_1,\\
&\ldots,\\
u_{k-1}&=f_{k-1},\\
u_{k}&=f_{k}-f_l^c,\\
u_{k+1}&=f_{k+1},\\
&\ldots,\\
u_n&=f_n,
\end{split}
\end{equation}
also forms a coordinate system.
Let $\nu$ be the associated lowest term valuation (as in Example \ref{eg lh val}). 
The image $\nu(L)$ can be obtained by using a ``sliding" operator $\calF_{-e_k+c e_l}$, defined as follows.
For each affine line $\ell$ in $\R^n$ in the direction of $-e_k+c e_l$, with $P \cap \ell \cap Z^n \neq \emptyset$, translate the set $\{P \cap \ell \cap \Z^n\}$ by $a(-e_k+c e_l)$ with $a \geq 0$ maximal non-negative number for which $a(-e_k+c e_l) +\{P \cap \ell \cap \Z^n\}\subset (\R_{\geq 0})^n$.
\begin{lemma}\label{sliding}
One obtains $\nu(L)$ by sliding the integral points of $P$ in the direction $-e_k+c e_l$, inside $(\R_{\geq 0})^n$,
i.e., 
$$\nu (L)=\calF_{-e_k+c e_l}(P \cap \Z^n).$$
\end{lemma}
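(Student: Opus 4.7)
The plan is to use the basis $\{f^\alpha:\alpha\in P\cap\Z^n\}$ of $L$, expand each $f^\alpha$ in the $u$-coordinates, and reduce the computation of $\nu(L)$ to a line-by-line calculation in the sliding direction. First I would fix the total order on $\Z^n$ to be lexicographic with the $k$-th coordinate most significant; then $-e_k+ce_l$ is strictly negative, so the ``lowest term'' will select the summand with the largest occurrence of $-e_k+ce_l$ in expansions of the form $\alpha+t(-e_k+ce_l)$. Using $f_k=u_k+u_l^c$ and $f_i=u_i$ for $i\neq k$, the binomial theorem gives
\begin{equation*}
f^\alpha \;=\; \prod_{i\neq k} u_i^{\alpha_i}\,(u_k+u_l^c)^{\alpha_k} \;=\; \sum_{s=0}^{\alpha_k}\binom{\alpha_k}{s}\,u^{\alpha+s(-e_k+ce_l)},
\end{equation*}
so the $u$-support of $f^\alpha$ is the lattice segment on the line $\ell_\alpha$ through $\alpha$ in direction $-e_k+ce_l$, truncated when the $k$-th coordinate hits zero. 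Distinct parallel lines are disjoint, so supports of basis elements on different lines are disjoint, and writing $L=\bigoplus_\ell L_\ell$ with $L_\ell=\mathrm{span}\{f^\alpha:\alpha\in P\cap\ell\cap\Z^n\}$ forces $\nu(L)=\bigsqcup_\ell \nu(L_\ell)$.

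Next I would handle a single line $\ell$. List $\ell\cap P\cap\Z^n=\{\alpha^{(0)},\ldots,\alpha^{(m)}\}$ with $\alpha^{(j)}=\alpha^{(0)}+j(-e_k+ce_l)$, and let $a=\alpha^{(m)}_k$ be the minimum $k$-th coordinate on $\ell$, so $\alpha^{(0)}_k=a+m$. Factoring out the common monomial $\prod_{i\neq k,l}u_i^{\alpha^{(0)}_i}\cdot u_l^{\alpha^{(0)}_l}$ and writing $v=u_k$, $w=u_l^c$, every $g=\sum_{j=0}^m c_j f^{\alpha^{(j)}}\in L_\ell$ reduces, up to this monomial, to
\begin{equation*}
\sum_{j=0}^m c_j\,w^j(v+w)^{a+m-j}\;=\;\sum_{t=0}^{a+m} d_t\,v^{a+m-t}w^t,\qquad d_t\;=\;\sum_{j=0}^m c_j\binom{a+m-j}{t-j},
\end{equation*}
and, with our order, $\nu(g)=\alpha^{(0)}+t^*(-e_k+ce_l)$ where $t^*=\max\{t:d_t\neq 0\}$. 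Setting $s=a+m-t$ and using $\binom{a+m-j}{t-j}=\binom{a+m-j}{s}$, the linear forms $(d_{a+m-s})_{s=0}^m$ in $(c_j)_{j=0}^m$ have matrix $A_{s,j}=\binom{a+m-j}{s}$; because this is a polynomial in $j$ of exact degree $s$, the rows of $A$ span all polynomials in $j$ of degree at most $m$, and $A$ is invertible.

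Invertibility lets me prescribe $d_a,d_{a+1},\ldots,d_{a+m}$ arbitrarily, and forces $d_a=\cdots=d_{a+m}=0\Rightarrow c=0\Rightarrow g=0$; thus the attainable $t^*$ are exactly $\{a,\ldots,a+m\}$, giving
\begin{equation*}
\nu(L_\ell)\;=\;\{\alpha^{(j)}+a(-e_k+ce_l):j=0,\ldots,m\}\;=\;\calF_{-e_k+ce_l}(\ell\cap P\cap\Z^n),
\end{equation*}
since $a$ is the maximal non-negative integer with $a(-e_k+ce_l)+(\ell\cap P\cap\Z^n)\subset(\R_{\geq 0})^n$. Taking the union over lines yields the lemma. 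The main obstacle I anticipate is the Vandermonde-type step together with the order bookkeeping: one must fix an order in which $-e_k+ce_l<0$ so that $\nu$ really selects the largest-$t$ summand, and then verify that no nonzero combination can collapse below $t^*=a$ (which would produce spurious valuation values lying strictly closer to the wall $\{\beta_k=0\}$ than the slid line). Both are handled by the polynomial-basis argument for $A$.
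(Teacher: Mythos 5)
Your proof is correct. The paper itself does not present a proof of Lemma~\ref{sliding}---it defers to \cite{PT} and offers only Example~\ref{eg slide} as an illustration---so there is no in-text argument to compare against. Your route is a clean and complete realization of exactly the idea the example is meant to convey: decompose $L$ along the parallel lines in direction $-e_k+ce_l$ (since $f^\alpha$ has $u$-support on the single line through $\alpha$, and distinct parallel lines are disjoint, so $\nu(L)=\bigsqcup_\ell\nu(L_\ell)$); reduce the lowest-term computation on each line to the binomial transition matrix $A_{s,j}=\binom{a+m-j}{s}$; deduce invertibility from the observation that row $s$ is the evaluation at $j=0,\dots,m$ of a polynomial of exact degree $s$ in $j$; and conclude that the attainable lowest-term exponents on the line are precisely the slid segment $\{\alpha^{(0)}+t(-e_k+ce_l):a\le t\le a+m\}=\calF_{-e_k+ce_l}(\ell\cap P\cap\Z^n)$, the two inclusions coming respectively from prescribing $d_{t^*}\neq 0$, $d_t=0$ for $t>t^*$, and from the fact that $d_a=\cdots=d_{a+m}=0$ forces $c=0$. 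The one detail worth flagging is that the monomial order is part of the data defining $\nu$ rather than something you get to choose mid-proof: the lemma holds for any additive total order on $\Z^n$ with $-e_k+ce_l<0$, which includes the standard lexicographic order (since $k<l$) as well as your variant with the $k$-th coordinate most significant, and either choice is consistent with the computations recorded in Example~\ref{eg slide}.
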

Instead of the proof, which can be found in \cite{PT}, we give the following  example which illustrates the main idea.
\begin{example}\label{eg slide}
Let $(X_P, \omega_P)$ be the symplectic toric manifold corresponding to the polytope $P=\conv\,\{(0,0),(1,0),(1,3),(0,3)\} \subset \R^2$. That is, $X_P$ is diffeomorphic to $\CP^1 \times \CP^1$ with product symplectic structure (with different rescaling of the Fubini--Study symplectic form on each factor).  
Let $\nu$ be the lowest term valuation associated to the coordinate system $$u_1=f_1-f_2^2,\ u_2=f_2.$$
One can easily compute that $\nu(f_1)=(0,2),\  \nu(f_2)=(0,1),\ \nu(f_1f_2^3)=(0,5),$ and in general 
$$\nu(f_1^a f_2^b)=(0,2a+b),\ a,b \in \Z_{\geq 0}.$$
Futhermore, $\nu (f_1-f_2^2)=(1,0)$, $\nu (f_1f_2-f_2^3)=(1,1),$
and one can observe that
$$\nu(L)=\calF_{(-1,2)}(P \cap \Z^2).$$
The polytopes $P$ and $\conv(\nu(L ))$ are presented on Figure \ref{fig Hirzebruch}.
\end{example}
Understanding $\nu(L)$ is not enough for constructing and understanding a toric degeneration. First of all, to construct a flat family with toric fiber $\pi^{-1}(0)$ one needs the associated semigroup $S=S(\nu)$ to be finitely generated. Additionally, the toric fiber $\pi^{-1}(0)=\Proj \C[S]$ is the toric variety associated to the Okounkov body $\Delta$ if $\Proj \C[S]$ is normal, that is, if $S$ is saturated. Moreover, to describe the Okounkov body one also needs to find $\nu(L^m )$ for $m>1$. Note that in general $L^m$ differs from $\hh^0(X,\calL^{\otimes m})$.
The following proposition describes an especially nice situation where all these conditions simplify. 
\begin{proposition}\label{nice cond give sympl}
Let $(X, \omega=\Phi_\calL^*(\omega_{FS}))$ be a $2n$-dimensional projective symplectic toric manifold associated to a smooth polytope $P$, with the projective embedding induced by a very ample line bundle $\calL$.
Let $\nu$ be a lowest term valuation associated to a coordinate system \eqref{coor sys}, and $S$ the induced semigroup.
Assume that there exists a smooth integral polytope $\Delta$ such that
$$S=(\cone \, \Delta)\cap (\Z \times \Z^n).$$
Then $(X, \omega)$ is symplectomorphic to the symplectic toric manifold $(X_\Delta, \omega_\Delta)$ associated to $\Delta$ via Delzant's construction.
\end{proposition}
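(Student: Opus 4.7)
The plan is to verify the hypotheses of Theorem \ref{from hk} and then invoke its ``in particular'' clause, which for smooth $X_0$ produces a symplectomorphism $(X,\omega) \cong (X_{\Delta(S)}, \omega_{\Delta(S)})$. Once we show that $\Delta(S) = \Delta$, the result is immediate.

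First I would check that $S$ is finitely generated and saturated, as required to apply Theorem \ref{from hk}. Since $\Delta$ is integral, $\cone\,\Delta \subset \R\times\R^n$ is a rational polyhedral cone, and Gordan's lemma gives finite generation of $S = (\cone\,\Delta)\cap(\Z\times\Z^n)$. Saturation is automatic, since by hypothesis $S$ is already the full intersection of a rational cone with the ambient lattice. Therefore Theorem \ref{from hk} applies and produces a toric degeneration whose central fibre is the normal toric variety $X_0 = \Proj\C[S]$.

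Next I would identify the Okounkov body. For any $(m,x) \in S$ with $m > 0$, the defining condition $(m,x) \in \cone\,\Delta$ is equivalent to $x/m \in \Delta$, so
$$\bigcup_{m>0}\{x/m : (m,x) \in S\}$$
is precisely the set of rational points of $\Delta$, which is dense. Taking the closed convex hull gives $\Delta(S) = \Delta$. Since $\Delta$ is smooth in the Delzant sense, the associated toric variety $X_\Delta = X_{\Delta(S)}$ is a smooth manifold; because $S$ is saturated, $\Proj\C[S]$ coincides with this normal toric variety, so $X_0$ is smooth. The ``in particular'' clause of Theorem \ref{from hk} then furnishes the desired symplectomorphism $(X,\omega) \to (X_{\Delta(S)}, \omega_{\Delta(S)}) = (X_\Delta, \omega_\Delta)$.

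The only delicate ingredient is the smoothness transfer: one must confirm that Delzant-smoothness of $\Delta$ (primitive edge vectors at each vertex forming a $\Z$-basis of $\Z^n$) implies smoothness of $\Proj\C[S]$ as an algebraic variety, and that the resulting Delzant toric K\"ahler form on $X_\Delta$ matches the form $\tilde\omega|_{U_0}$ produced by Harada--Kaveh. Both points are standard toric dictionary: the affine charts of $\Proj\C[S]$ about its torus-fixed points are isomorphic to $\C^n$ precisely when $\Delta$ is Delzant-smooth at each vertex, and the K\"ahler form identification is forced by the fact that both $\tilde\omega|_{U_0}$ and $\omega_\Delta$ are toric K\"ahler forms on the same toric manifold with moment image exactly $\Delta$, hence agree up to Delzant's equivariant uniqueness.
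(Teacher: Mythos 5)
Your proof is correct and takes essentially the same route as the paper's own (sketchy) proof: Gordan's lemma for finite generation, saturation from the hypothesis that $S$ is the full lattice-cone intersection, identification of the Okounkov body with $\Delta$, and conclusion via the ``in particular'' clause of Theorem \ref{from hk}. You supply more detail than the paper does (in particular the explicit smoothness transfer and the density argument for $\Delta(S)=\Delta$), but the argument structure is the same.
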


\begin{proof}[Sketch of a proof]
The assumptions imply that the semigroup $S$ is saturated and (by Gordan's Lemma) finitely generated. Therefore there is a toric degeneration $(\mathfrak{X}, \tilde{\omega})$ with generic fiber $(X, \omega)$ and the special fiber $\pi^{-1}(0)=\Proj \C[S]$ which is a normal toric variety. Moreover, the Okounkov body associated to the semigroup $S$ is precisely $\Delta$ and therefore $\Proj \C[S]$, equipped with the restriction of  $\tilde{\omega}$, is the symplectic toric manifold $(X_\Delta, \omega_\Delta)$ associated to $\Delta$ via Delzant's construction.
\end{proof}
Note that $S=(\cone \, \Delta)\cap (\Z \times \Z^n)$ implies, in particular, that $\nu (L^m)$ contains ``enough" of integral points, namely that
$$\forall_{m\geq 1}\ \nu(L^m)=m\,\Delta\cap \Z^n=\conv(\nu(L^m))\cap \Z^n.$$

To understand better the requirement $\conv\,(\nu(L^m )) \cap \Z^n=\nu(L^m ),$ consider the following example.
\begin{example} (``Enough" of integral points and saturation.)
Let $(X_P, \omega_P)$ be the symplectic toric manifold corresponding to the polytope $P=\conv\,\{(0,0),(2,0),(2,2),(0,2)\} \subset \R^2$, that is, $X_P$ is diffeomorphic to $\CP^1 \times \CP^1$ as in the previous example, but with a different symplectic form.  
As before, let $\nu$ be the lowest term valuation associated to the coordinate system $$u_1=f_1-f_2^2,\ u_2=f_2.$$
 Then 
 $$\nu(L)=\calF_{(-1,2)}(P \cap \Z^2)=\{(0,j); j=0,\ldots, 6\}\cup \{(1,0), (1,3)\}\subsetneq \conv\,(\nu(L))\cap \Z^2.$$
 The figure below presents the integral points $P\cap \Z^2$ and $\nu(L)=\calF_{(-1,2)}(P \cap \Z^2)$. The semigroup $S$ is not saturated: we have that $(1,1,1) \notin S$ even though $(2,2,2) \in S$, as $(2,2,2)=\nu (f_1(f_1-f_2^2) \cdot (f_1-f_2^2)) \in \{2\}\times \nu (L^2)$.

\begin{figure}[h]
\includegraphics[scale=0.5]{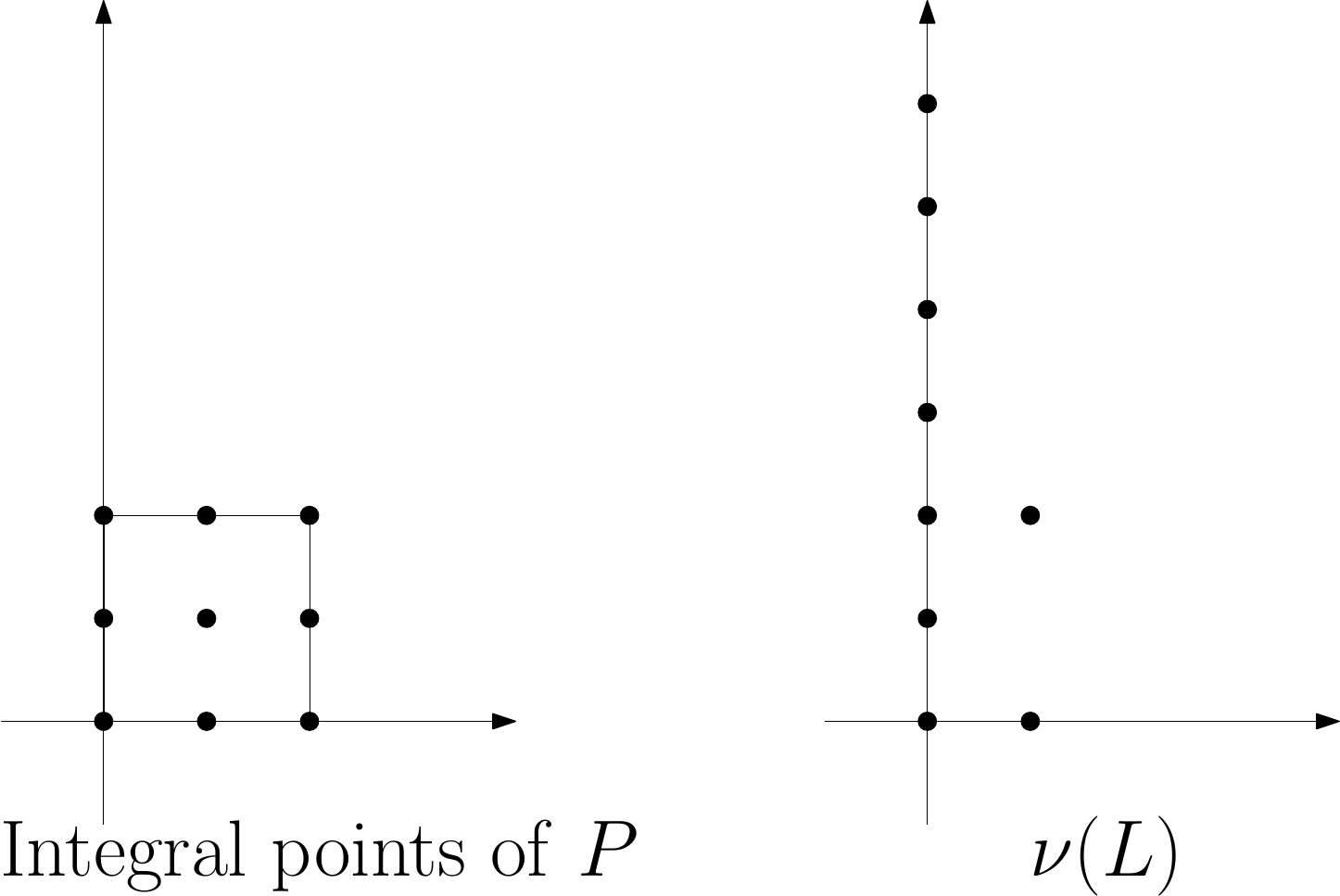}
\end{figure}

 \end{example}

  The following condition is sufficient, though not necessary, to guarantee that we have enough of integral points.
  \begin{corollary}\label{cor int pt}
Let
$$\Delta= \big\{ p \in \R^2 \ \big| \ 0\leq \langle p, e_1\rangle \leq \lambda_1,\  0\leq \langle p, e_2\rangle \mbox{ and }
\big\langle p, e_2 +  A e_1 \big\rangle \leq \lambda_2 \big\}\textrm{ and } c \in \Z_{>0}.$$
If 
$$\lambda_2-c\lambda_1 >0,$$
then 
$$(\conv\, \calF_{(-1,c)}(\Delta \cap \Z^2))\cap \Z^2=\calF_{(-1,c)}(\Delta \cap \Z^2).$$
\end{corollary}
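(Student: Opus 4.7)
The plan is to identify $\conv(\calF_{(-1,c)}(\Delta\cap\Z^2))$ explicitly as an integer trapezoid $\Delta'$ of the same shape class as $\Delta$, and then verify that every integer point of $\Delta'$ already lies in the slid set.

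First I would parametrize the slide by the integer $k=cx+y$, whose level sets are precisely the lines in direction $-e_1+ce_2$. On the line $cx+y=k$ the integer points of $\Delta$ form a contiguous set $\{(x,k-cx) : x_{\min,k}\le x\le x_{\max,k}\}$, the sliding shift is $a_k=x_{\min,k}$ (an integer), and the slid points are $(j,k-cj)$ for $j=0,1,\ldots,N_k-1$ with $N_k = x_{\max,k}-x_{\min,k}+1$. The hypothesis $\lambda_2-c\lambda_1>0$ enters at this stage to guarantee that any line $cx+y=k$ with $k>c\lambda_1$ exits $\Delta$ through the right edge $x=\lambda_1$ rather than through the bottom edge $y=0$, so that $x_{\max,k}=\lambda_1$ throughout the sliding range. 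The case $A\ge c$ is trivial because then no line with $k>\lambda_2$ meets $\Delta$, no actual sliding occurs, and $\conv(\calF_{(-1,c)}(\Delta\cap\Z^2))=\Delta$ is already integer; so assume henceforth $A<c$.

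Next I would show $\conv(\calF_{(-1,c)}(\Delta\cap\Z^2)) = \Delta'$, where
$$\Delta'=\bigl\{p\in\R^2 \;\big|\; 0\le\langle p,e_1\rangle\le\lambda_1,\ 0\le\langle p,e_2\rangle,\ \langle p,e_2+(2c-A)e_1\rangle\le\lambda_2+(c-A)\lambda_1\bigr\}.$$
This is a nondegenerate integer trapezoid of the same form as $\Delta$ with new parameters $A'=2c-A$, $\lambda'_1=\lambda_1$, $\lambda'_2=\lambda_2+(c-A)\lambda_1$, its right edge having length $\lambda'_2-A'\lambda'_1 = \lambda_2-c\lambda_1>0$. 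The inclusion $\calF_{(-1,c)}(\Delta\cap\Z^2)\subseteq\Delta'$ is a direct check from the line-by-line description, and the reverse $\Delta'\subseteq\conv(\calF_{(-1,c)}(\Delta\cap\Z^2))$ follows because the four vertices of $\Delta'$ all belong to the slid set: $(0,0)$, $(\lambda_1,0)$ and $(\lambda_1,\lambda_2-c\lambda_1)$ are already in $\Delta\cap\Z^2$ (using $A<c$ and $\lambda_2>c\lambda_1$), while $(0,\lambda_2+(c-A)\lambda_1)$ is the slide of the top-right vertex $(\lambda_1,\lambda_2-A\lambda_1)$ along the extreme line $k=\lambda_2+(c-A)\lambda_1$.

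Finally I would verify $\Delta'\cap\Z^2\subseteq\calF_{(-1,c)}(\Delta\cap\Z^2)$. For $(j,m)\in\Delta'\cap\Z^2$ lying on the line $k=cj+m$, membership in the slid set is equivalent to $j\le N_k-1$. I would check this in three ranges: for $k\le c\lambda_1$ one has $N_k-1=\lfloor k/c\rfloor\ge j$ as soon as $m\ge 0$; for $c\lambda_1<k\le\lambda_2$ one has $N_k-1=\lambda_1\ge j$; both are immediate from $(j,m)\in\Delta'$. The key computation is in the third range $\lambda_2<k\le\lambda_2+(c-A)\lambda_1$, where $N_k-1=\lambda_1-\lceil(k-\lambda_2)/(c-A)\rceil$ and the inequality to be checked reads $\lceil(k-\lambda_2)/(c-A)\rceil\le\lambda_1-j$. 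Since $\lambda_1-j\in\Z$, the ceiling drops, and substituting $k=cj+m$ reduces the inequality to $(2c-A)j+m\le\lambda_2+(c-A)\lambda_1$, which is precisely the upper-right defining inequality of $\Delta'$. The main obstacle is identifying the candidate $\Delta'$ so that its upper-right supporting hyperplane aligns exactly with the integrality constraint from the ceiling function; the hypothesis $\lambda_2-c\lambda_1>0$ simultaneously ensures $\Delta'$ is nondegenerate and that lines with $k>c\lambda_1$ do not interact with the bottom edge, which is what makes the arithmetic reduction go through cleanly.
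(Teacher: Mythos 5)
The paper states Corollary~\ref{cor int pt} without proof; the only textual clue is the remark immediately following it, which identifies $\conv\calF_{(-1,c)}(\Delta\cap\Z^2)$ as the trapezoid with data $(2c-A,\ \lambda_1,\ \lambda_2+(c-A)\lambda_1)$ when $c>A$, and as $\Delta$ itself when $c\le A$. So there is no argument in the paper to compare against, and your proposal must be judged on its own terms.

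Your argument is correct and self-contained. Parametrizing the slide lines by the invariant $k=cx+y$, computing $x_{\min,k}=\max\bigl(0,\lceil(k-\lambda_2)/(c-A)\rceil\bigr)$ and $x_{\max,k}=\min\bigl(\lambda_1,\lfloor k/c\rfloor\bigr)$ when $c>A$, and reading off the slid set on each line as $\{(j,k-cj):0\le j\le N_k-1\}$ is exactly the right bookkeeping for $\calF_{(-1,c)}$. The hypothesis $\lambda_2-c\lambda_1>0$ is used precisely where you say: it orders the three ranges $k\le c\lambda_1<\lambda_2<\lambda_2+(c-A)\lambda_1$ correctly and forces $x_{\max,k}=\lambda_1$ for all $k>c\lambda_1$, which makes the right edge of $\Delta'$ have positive height $\lambda_2-c\lambda_1$. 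Your candidate $\Delta'$ agrees with the paper's stated trapezoid. The inclusion $\calF(\Delta\cap\Z^2)\subseteq\Delta'$ follows from $j\le x_{\max,k}-x_{\min,k}$ together with $(c-A)x_{\min,k}\ge k-\lambda_2$ and $x_{\max,k}\le\lambda_1$; the four vertices of $\Delta'$ are readily seen to lie in the slid set (the top-left one comes from the extreme line $k=\lambda_2+(c-A)\lambda_1$ where $N_k=1$), giving $\Delta'\subseteq\conv\calF(\Delta\cap\Z^2)$. The final integrality check in the third range, where $\lceil(k-\lambda_2)/(c-A)\rceil\le\lambda_1-j$ reduces (because $\lambda_1-j\in\Z$) to the linear inequality $(2c-A)j+m\le\lambda_2+(c-A)\lambda_1$ defining $\Delta'$, is the crux and you handle it cleanly. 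The degenerate case $c\le A$ is also correctly dispatched: no line has a positive lower bound on $x$, so no sliding occurs. One small implicit assumption, harmless in context but worth stating, is that $\lambda_1,\lambda_2,A\in\Z$ (so that $\Delta$ is an integral polytope, $\lceil\lambda_1\rceil=\lambda_1$, and $\lambda_1-j\in\Z$); this is built into the Delzant/toric-degeneration setup but is not literally in the corollary's statement.
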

 Note that in that case the polytope $\conv\, \calF_{(-1,c)}(\Delta \cap \Z^2)$ is also a trapezoid, namely:
  $$\big\{ p \in \R^2 \ \big| \ 0\leq \langle p, e_1\rangle \leq \lambda_1,\  0\leq \langle p, e_2\rangle \mbox{ and }
\big\langle p, e_2 +  (2c-A) e_1 \big\rangle \leq \lambda_2+(c-A)\lambda_1) \big\},$$ 
if $c>A$ (see Figure \ref{fig EgDegeneration}), or $\Delta$ if $c\leq A$. 
 \begin{figure}[h]
 \includegraphics[scale=0.8]{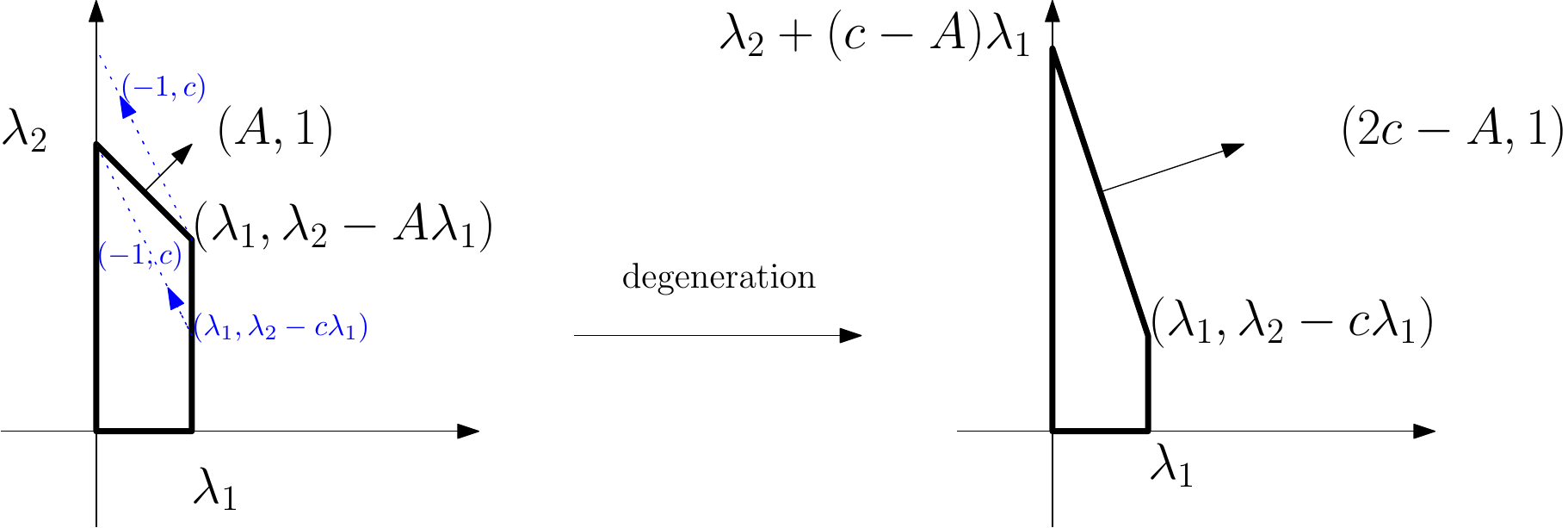}
 \caption{Toric degeneration of a Hirzebruch surface.}\label{fig EgDegeneration}
 \end{figure}

\subsection{Cohomological rigidity for Bott manifolds}\label{sec bott mfd}
A Bott manifold is a manifold obtained as the total space of a tower of iterated bundles with fiber $\CP^1$ and first base space $\CP^1$. 
Such manifold naturally carries an algebraic torus action, and can be viewed as a toric manifold. Note that $4$-dimensional Bott manifolds are exactly the Hirzebruch surfaces discussed in Example \ref{eg h}.
For more information about Bott manifolds see, for example, \cite{GK}.

The simplest example of a $2n$-dimensional Bott manifold is the product of $n$ copies of $\CP^1$. 
Equipped with a product symplectic structure 
$\omega=\pi_1^*(a_1 \omega_{FS} ) + \ldots +\pi_n^*(a_n \omega_{FS} )$, for some $a_j \in \R_{>0}$, and the standard toric action\footnote{
In the description of the symplectic structure, $\pi_j \colon \CP^1 \times \ldots \times \CP^1 \rightarrow \CP^1$ denotes the projection onto the $j$-th factor, and $\omega_{FS}$ stands for the Fubini--Study symplectic form. 
The standard action of $(S^1)^n$ on $(\CP^1)^n$ is the one where each $S^1$  acts on the respective copy of $\CP^1$ by $e^{it}\cdot[(z_0,z_1)]=[(z_0,e^{it}z_1)]$.} it becomes a symplectic toric manifold, whose Delzant polytope is a product of intervals, with lengths depending on $a_j$'s. In particular, if all $a_j$'s are equal, then the moment image is a hypercube.

A moment image for a general $2n$-dimensional Bott manifold is combinatorially an $n$-dimensional hypercube. By applying a translation and a $\gl(n,\Z)$ transformation one can always arrange for the moment image to be a polytope of the form
$$ \Delta=\Delta(A, \lambda) = \Big\{ p \in \R^n \ \big| \ \langle p, e_j\rangle \geq 0 \mbox{ and }
\big\langle p, e_j + \sum_i A^i_j e_i \big\rangle \leq \lambda_j \textrm{ for all } 1 \leq j  \leq n\Big\},$$
where $\lambda \in (\R_{>0})^n$ and the parameters $A^i_j$ satisfy that $A^i_j = 0$ unless $i < j$ and thus can be arranged in an $n \times n$ strictly upper-triangular  integral matrix $A \in \mat$. Certain relation between $A$ and $\lambda$ must be satisfied in order for $\Delta(A, \lambda)$ to have $2^n$ facets and be combinatorially equivalent to a hypercube (see \cite{PT}).
In that case we say that $(A, \lambda)$ {\it defines a symplectic toric Bott manifold} $(M_{A}, \omega_{ \lambda})$ corresponding to the Delzant polytope $\Delta(A, \lambda)$. The matrix $A$ encodes the twisting of consecutive $\CP^1$ bundles, and thus determines a diffeomorphism type of $M_{A}$, while $\lambda$ determines the symplectic structure.
By a classical result of Danilov \cite{D}
\begin{equation}\label{cohomology}
\hh^*(M_A;\Z) =\Z[x_1,\dots,x_n]/\big(x_i^2 + \sum_j A^i_j x_j x_i \big),
\end{equation}
with $[\omega_\lambda]=\sum_i \lambda_i x_i$. Note that this particular presentation of $\hh^*(M_A;\Z)$ depends on $A$. (The element $x_j$ is the Poincar\'e dual to the preimage of the facet $\Delta(A,\lambda)\cap \{\big\langle p, e_j + \sum_i A^i_j e_i \big\rangle = \lambda_j\}$.)
Using the above presentation we define the following special elements  
 \begin{equation}\label{special elements}
 \alpha_k = - \sum_j A^k_j x_j \in \hh^*(M_A;\Z), \quad y_k = x_k -\frac{1}{2} \alpha_k \in \hh^*(M_A;\Q)
 \end{equation}
  for all k.
We say  $x_k$ is of {\it even (respectively odd)
exceptional type} if $\alpha_k = c y_\ell$ for some $\ell > k$, where $c$ is an
even (respectively odd) integer. \label{exceptional def}
In ``coordinates", this means that $A_j^k = 0$ for $j < \ell$ and $A_j^k = \frac{1}{2} A^k_\ell A_j^\ell$ for $j > \ell$.

We say that a Bott manifold is {\it $\Q$-trivial} if  $\hh^*(M;\Q) \simeq \hh^*((\CP^1)^n;\Q)$.
For example, observe that all Hirzebruch surfaces are $\Q$-trivial Bott manifolds.

Using toric degenerations one can prove the following result, which is the key ingredient of the proof of Theorem \ref{thm coh rig}.
\begin{proposition}\cite{PT}\label{basic change}
Let $(M,\omega)$ and $(\tM,\tomega)$ be symplectic Bott manifolds
associated to strictly upper triangular $A$ and $\tA$ in $\mat$
and $\lambda$ and $\tlambda$ in $\Z^n$, respectively.
Assume that there exist integers $1 \leq k < \ell \leq n$
so that $A_\ell^k$ and $ \tA_\ell^k $ are of the same parity and the isomorphism from $\Z[x_1,\dots,x_n]$ to $\Z[\tx_1,\dots,\tx_n]$
that sends $x_k$ to $\tx_k +\frac{\tA_\ell^k - A_\ell^k}{2} \ \tx_\ell$ and $x_i$ to $\tx_i$ for all
$i \neq k$ descends to an isomorphism from $\hh^*(M;\Z)$ to $\hh^*(\tM;\Z)$
and takes $\sum \lambda_i x_i$ to $\sum \tlambda_i \tx_i$.
If $A_\ell^k + \tA_\ell^k \geq 0$, then
$M$ and $\tM$ are symplectomorphic.
\end{proposition}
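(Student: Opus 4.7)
The strategy is to build an explicit toric degeneration of $(M,\omega)$ whose central fiber is $(\tilde M,\tilde\omega)$, by applying the coordinate change of equation \eqref{coor sys} to the data of $M$ for the specific pair $(k,\ell)$ and exponent
\[
c \;=\; \frac{A_\ell^k + \tilde A_\ell^k}{2}.
\]
The same-parity hypothesis makes $c$ an integer, and the hypothesis $A_\ell^k + \tilde A_\ell^k \ge 0$ makes $c \ge 0$, so we are indeed allowed to use \eqref{coor sys}. After identifying $L = \hh^0(M,\calL)$ with a subspace of $\C(M)$ at the origin-vertex of $\Delta(A,\lambda)$, introduce the coordinate system $u_i = f_i$ for $i\neq k$ and $u_k = f_k - f_\ell^c$, and let $\nu$ be the corresponding lowest term valuation.

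The plan is then to invoke Proposition \ref{nice cond give sympl} with $\Delta := \Delta(\tilde A,\tilde\lambda)$, which is smooth and integral by hypothesis. The key identification I would establish is
\[
S(\nu,\calL) \;=\; (\cone\,\Delta(\tilde A,\tilde\lambda))\cap (\Z\times \Z^n),
\]
by showing that $\nu(L^m) = m\,\Delta(\tilde A,\tilde\lambda)\cap \Z^n$ for every $m\ge 1$. For $m=1$ this is the statement that the sliding operator $\calF_{-e_k+ce_\ell}$ carries the integral points of $\Delta(A,\lambda)$ onto those of $\Delta(\tilde A,\tilde\lambda)$, which is the $n$-dimensional analogue of the Hirzebruch calculation displayed in Corollary \ref{cor int pt}: only the facet with inner description $\langle p,e_k+\sum_i A^i_k e_i\rangle \le \lambda_k$ is affected, and it is moved to $\langle p,e_k+\sum_i \tilde A^i_k e_i\rangle \le \tilde\lambda_k$ precisely because the hypothesized ring isomorphism sends $x_k \mapsto \tilde x_k + \tfrac{\tilde A_\ell^k - A_\ell^k}{2}\tilde x_\ell$ and $\sum\lambda_i x_i \mapsto \sum\tilde\lambda_i\tilde x_i$, forcing $\tilde\lambda_i = \lambda_i$ for $i\neq k$ and determining $\tilde\lambda_k$ exactly so that both polytopes have the same edge lengths along the direction $e_\ell$.

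For $m > 1$ the same sliding description applies after rescaling, and the higher-dimensional analogue of the positivity condition $\lambda_2 - c\lambda_1 > 0$ of Corollary \ref{cor int pt} becomes the statement that the relevant edges of $\Delta(\tilde A,\tilde\lambda)$ have positive length; this is automatic, since by hypothesis $(\tilde A,\tilde\lambda)$ defines a genuine symplectic toric Bott manifold, whose polytope is combinatorially a hypercube with all edges of positive length. With this in hand, $S$ is both saturated and finitely generated, Proposition \ref{nice cond give sympl} applies, and identifies $(M,\omega)$ symplectomorphically with $(X_{\Delta(\tilde A,\tilde\lambda)},\omega_{\Delta(\tilde A,\tilde\lambda)}) = (\tilde M,\tilde\omega)$.

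The main obstacle I expect is the careful verification that the slide affects only the $k$-th upper facet and in exactly the way dictated by the cohomological hypothesis. In dimension $n$ the upper facets of $\Delta(A,\lambda)$ have normals $e_j + \sum_i A^i_j e_i$ involving the full $j$-th column of $A$, and a priori a slide in direction $-e_k + ce_\ell$ could interact with several of them at once. Writing $c = (A_\ell^k + \tilde A_\ell^k)/2$ together with the compatibility conditions imposed by the isomorphism of cohomology rings (in particular, the relations $x_j^2 + \sum_i A^i_j x_j x_i = 0$ going to their tilded analogues under $x_k\mapsto \tilde x_k + \tfrac{\tilde A_\ell^k - A_\ell^k}{2}\tilde x_\ell$) is precisely what pins down the matrix transformation $A \leadsto \tilde A$ to be a change in the $k$-th column only; translating this algebraic statement into the polytope-sliding picture is where the real work lies, and is carried out in \cite{PT}.
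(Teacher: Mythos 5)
Your plan follows the paper's proof closely: same choice of $c=\tfrac{1}{2}(A_\ell^k+\tA_\ell^k)$, same lowest term valuation from the coordinate system \eqref{coor sys}, same goal of establishing $\nu(L^m)=m\,\Delta(\tA,\tlambda)\cap\Z^n$ for all $m$ and then invoking Proposition~\ref{nice cond give sympl}, and you correctly identify that the ``real work'' is translating the cohomological hypotheses into polytope-sliding statements via two-dimensional slices in the $(e_k,e_\ell)$-direction.

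There is, however, one step you gloss over that the paper treats carefully. Lemma~\ref{sliding} computes $\nu(L)$, and to extend the sliding description to $\nu(L^m)$ one must know what $L^m$ is. In general $L^m$ (the image of $\mathrm{Sym}^m L$) can be a proper subspace of $\hh^0(M,\calL^{\otimes m})$, in which case its basis is \emph{not} the full set of lattice points $m\,\Delta(A,\lambda)\cap\Z^n$, and $\calF_{-e_k+ce_\ell}\bigl(m\,\Delta(A,\lambda)\cap\Z^n\bigr)$ would overshoot $\nu(L^m)$. The paper handles this by first reducing, without loss of generality, to the case where $\Delta(A,\lambda)$ is a \emph{normal} polytope (replacing $\omega$ by $(n-1)\omega$, using that a dilate $mP$ with $m\ge n-1$ of any integral polytope is normal), which guarantees $L^m=\hh^0(M,\calL^{\otimes m})$ and hence that the slide of $m\,\Delta(A,\lambda)\cap\Z^n$ really does compute $\nu(L^m)$. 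Your phrase ``the same sliding description applies after rescaling'' elides this reduction; it should be made explicit. A minor bookkeeping slip: since the map fixes $x_i$ for $i\neq k$ and sends $x_k\mapsto\tx_k+\tfrac{\tA_\ell^k-A_\ell^k}{2}\tx_\ell$, matching $\sum\lambda_i x_i$ with $\sum\tlambda_i\tx_i$ forces $\tlambda_i=\lambda_i$ for $i\neq\ell$ (in particular $\tlambda_k=\lambda_k$) and $\tlambda_\ell=\lambda_\ell+\lambda_k\tfrac{\tA_\ell^k-A_\ell^k}{2}$, not the statement you wrote about $i\neq k$ and $\tlambda_k$.
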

\begin{proof}[Sketch of a proof]
Without loss of generality we can assume that the polytope $\Delta(A,\lambda)$ associated to $(A, \lambda)$ is {\it normal}, that is,
any integral point of $m\,\Delta(A,\lambda)$ can be expressed as a sum of $m$ integral points of $\Delta(A,\lambda)$:
$$\forall_{m \in \Z_{>0}}\ \forall_{x \in m\,\Delta(A,\lambda)\cap \Z^n}\ \exists_{x_1,\ldots,x_m \in \Delta(A,\lambda)\cap \Z^n}\textrm{  such that  } x=x_1+\ldots+x_m.$$ 
Indeed, if $\Delta(A,\lambda)$ is not a normal polytope, replace  $(M,\omega)$ and $(\tM,\tomega)$ by  $(M,(n-1)\,\omega)$ and $(\tM,(n-1)\,\tomega)$. This dialates the corresponding polytopes by $(n-1)$.
For any integral polytope $P \subset \R^n$ its  dialate $m P$ with $m \geq n-1$ is normal (see, for example, \cite[Theorem 2.2.12]{CLS}). Obviously if $(M,(n-1)\,\omega)$ and $(\tM,(n-1)\,\tomega)$ are symplectomorphic, then so are $(M,\omega)$ and $(\tM,\tomega)$.
As usually, let $\calL$ denote the very ample line bundle over $M$ corresponding to $\omega$, and $L$ the space of its holomorphic sections.
Note that normality implies that $L^m$ can be identified with $H^0(M, \calL^{\otimes m})$  as a basis for both of these vector spaces is given by the integral points $m\,\Delta(A,\lambda)\cap \Z^n$.

Also without loss of generality we can assume that $\tA_\ell^k\geq A_\ell^k$.
Let $c=\frac{1}{2}(A_\ell^k + \tA_\ell^k) \geq 0$.
We will work with a lowest term valuation $\nu$ associated to the coordinate system \eqref{coor sys}.
From Lemma \ref{sliding} and the normality assumption, for all $m \geq 1$ we have that
$$\nu(L^m)=\mathcal{F}_{-e_k+ce_l}(m\,\Delta(A,\lambda)\cap \Z^n).$$

To understand $\nu(L^m)$ consider the action of $\mathcal{F}_{-e_k+ce_l}$ on  $2$-dimensional ``slices", that is, the intersections of $m\,\Delta(A,\lambda)$ with affine subspaces which are translations of $(e_k,e_l)$-planes. Such slices are either empty or are trapezoids like in Example \ref{eg slide} and Corollary \ref{cor int pt}, possibly with a cut.
A slightly tedious computation shows that 
$$\forall_{m \geq 1}\ \conv(\nu(L^m))=m\,\Delta(\tA, \tlambda).$$
For that computation one uses relations between $A$, $\lambda$, $\tA$ and $\tlambda$ which are implied by the
 facts that $\Delta(A,\lambda)$ and $\Delta(\tA, \tlambda)$ are combinatorially hypercubes, and by the existence of the isomorphism described in the statement of the proposition. 
 These relations also allow to generalize Corollary \ref{cor int pt} (precisely: to show that the appropriate generalization of condition $\lambda_2-c\lambda_1>0$ holds) and show that
$$\nu(L^m)=m\,\Delta(\tA, \tlambda)\cap \Z^n.$$
This means that the semigroup $S$ associated to the valuation $\nu$ of $(M,\omega)$ is exactly $S=(\cone \,\Delta(\tA, \tlambda))\cap (\Z \times \Z^n)$.
 Then the claim follows from Proposition \ref{nice cond give sympl}.
\end{proof}
Note that if $x_k$ is even (resp. odd) exceptional, say $\alpha_k=my_l$, then one can construct an isomoprhism as in Proposition \ref{basic change} from 
$\hh^*(M_A;\Z)$ to 
$\hh^*(M_{\tA};\Z)$ for some $\tA$ with $\tA_\ell^k$ equal to $0$ (resp. $-1$).
For example if $x_k$ is of even exceptional type, i.e., $\alpha_k=2my_l$ for some $m$ and $\ell$, implying that  $A^k_{\ell}=-2m$ and $A^k_j=-mA^\ell_j$ for $j\neq \ell$, then one should put $\tA^k_l=0$, $\tA^i_j=A^i_j$ for all $i$ and all $j \neq \ell$, and $\tA^i_l=A^i_\ell+mA^i_k$ for all $i \neq k$. 
Therefore, consecutive applications of the above proposition lead to simplifying the description of a given Bott manifold.

\begin{corollary}
Any symplectic toric Bott manifold is symplectomorphic to one for which $A^i_j=0$ (resp. $A^i_j=-1$) whenever $x_i$ has even (resp. odd) exceptional type and $\alpha_i=my_j$.
\end{corollary}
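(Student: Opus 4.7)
I would proceed by iterated application of Proposition \ref{basic change}. Fix an exceptional $x_k$ with $\alpha_k = c\,y_\ell$ for some $\ell > k$ and integer $c$, so that $A^k_\ell = -c$, $A^k_j = 0$ for $j < \ell$, and $A^k_j = \tfrac{1}{2}A^k_\ell A^\ell_j$ for $j > \ell$. In the even case ($c = 2m$) I would define $\tA$ by $\tA^k_\ell = 0$, $\tA^i_j = A^i_j$ for all $j \neq \ell$, and $\tA^i_\ell = A^i_\ell + m A^i_k$ for $i \neq k$; in the odd case one takes the analogous $\tA$ with $\tA^k_\ell = -1$, adjusting the formula for $\tA^i_\ell$ accordingly.

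The first task is to verify the hypotheses of Proposition \ref{basic change} for this $\tA$. Same parity at position $(k,\ell)$ is immediate by construction. Descent of the polynomial map $x_k \mapsto \tx_k + \tfrac{\tA^k_\ell - A^k_\ell}{2}\tx_\ell$ (with $x_i \mapsto \tx_i$ otherwise) to the cohomology rings \eqref{cohomology} is a direct substitution check: the prescribed formula $\tA^i_\ell = A^i_\ell + m A^i_k$ is exactly what makes the $k$-th relation $x_k^2 + \sum_j A^k_j x_j x_k = 0$ transform into the corresponding relation for $\tA$, and the other relations $x_i^2 + \sum_j A^i_j x_j x_i = 0$ for $i \neq k$ are preserved since the substitution touches only $x_k$ and $\tA^i_j = A^i_j$ for $j \neq \ell$. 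The matching $\tlambda$ is then read off by rewriting $[\omega_\lambda] = \sum \lambda_i x_i$ in the $\tx$-basis.

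When the non-negativity condition $A^k_\ell + \tA^k_\ell \geq 0$ holds directly, one application of Proposition \ref{basic change} yields a symplectomorphism to a Bott manifold with the desired entry at $(k,\ell)$; when it fails, I would chain several applications at the pair $(k,\ell)$ through intermediate values of $\tA^k_\ell$ of the correct parity, chosen so that the sum remains non-negative at each step. Since $\tA^i_j = A^i_j$ for every $j \neq \ell$ throughout, any prior simplification at a different column is undisturbed, and by construction $x_k$ is no longer exceptional in the new presentation. Processing the exceptional $x_k$'s in a suitable order (say, increasing $k$) therefore terminates in finitely many steps in the claimed simplified form.

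The main obstacle is the descent-to-cohomology check, verifying that the substitution is compatible with \emph{all} defining relations simultaneously, together with ensuring the non-negativity hypothesis of Proposition \ref{basic change} holds throughout the iteration; once these technicalities are in place, the simplification is essentially bookkeeping.
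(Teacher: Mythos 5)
Your proposal matches the paper's approach: the corollary is obtained exactly by iterating Proposition~\ref{basic change} with the choice $\tA^k_\ell = 0$ (resp.\ $-1$), $\tA^i_j = A^i_j$ for $j\neq\ell$, and $\tA^i_\ell = A^i_\ell + m A^i_k$, and the paper itself treats the corollary as a ``therefore'' following its preceding remark, without spelling out the bookkeeping you describe. Two small points are worth flagging. First, your processing order ``increasing $k$'' is the wrong choice: applying the move at $(k,\ell)$ alters $A^i_\ell$ only for $i<k$ (since $A^i_k=0$ for $i\geq k$ by strict upper-triangularity), and it can also change the exceptional status of $x_i$ only for $i<k$; so one should work in \emph{decreasing} $k$, guaranteeing that entries already normalized at indices larger than $k$ are never disturbed and that the procedure terminates in at most $n$ passes. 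Second, your instinct to worry about the sign hypothesis $A^k_\ell + \tA^k_\ell\geq 0$ is well placed --- for $\alpha_k = 2m y_\ell$ with $m>0$ one has $A^k_\ell=-2m<0$ and a one-shot move to $0$ is not allowed --- and a chain such as $-2m\to 2m\to 0$ does satisfy the sign condition at each step; but to make this rigorous one must also check that the intermediate pair $(\tA,\tlambda)$ defines a combinatorial hypercube and that the intermediate isomorphism descends on integral cohomology, which requires writing out the analogue of the even-type computation at the intermediate value. The paper does not carry this out explicitly either, so on this point your sketch is at the same level of detail as the source.
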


In the case of $\Q$-trivial Bott manifolds all 
$x_i$ have exceptional type as shown in \cite[Proposition 3.1]{CM}. 
Therefore, such symplectic toric Bott manifold must be a product of the following standard models of $\Q$-trivial Bott manifolds.

\begin{example}($\Q$-trivial Bott manifolds)
Take $n \in \Z_{>0}$. Let $A_n^i = -1$ for all $1\leq i < n$, 
and $A^i_j = 0$ otherwise. For such upper triangular matrix $A=[A_j^i]$
and any $\lambda \in (\R_{>0})^n$, the polytope $\Delta(A, \lambda)$
 is combinatorially a hypercube, thus it defines a symplectic toric Bott manifold, which we will denote by $\mathcal{H}=\mathcal{H}(\lambda_1,\ldots,\lambda_n)$.
Observe that 
$$\hh^*(\mathcal{H};\Z) = \Z[x_1,\dots,x_n]/\big( x_1^2 - x_1x_n, \dots, x_{n-1}^2 - x_{n-1} x_n, x_n^2 \big).$$
Consider elements $y_i \in \hh^*(\mathcal{H};\Q)$ defined by
$y_i = x_i - \frac{1}{2} x_n$ for all $i < n$, and $y_n = x_n$, and note that they form a basis for $\hh^*(\mathcal{H};\Q)$.
Moreover, as $y_i^2 = 0$ for all $i$, we get that $\hh^*(\mathcal{H};\Q) \simeq \Q[y_1,\dots,y_n]/\big(y_1^2, \ldots, y_n^2)$, that is, $\mathcal{H}$ is $\Q$-trivial.

More generally, any partition of $n$, $\sum_{i=1}^m l_i = n$ together with $\lambda \in (\R_{>0})^n$, define a $\Q$-trivial Bott manifold
$$\mathcal{H}(\lambda_1, \ldots, \lambda_{l_1})\,\times \ldots \times \mathcal{H}(\lambda_{n-l_m+1}, \ldots, \lambda_{n}).$$
\end{example}
\begin{corollary}\label{std form}
Each $2n$-dimensional $\Q$-trivial Bott manifold $M$ with integral symplectic form is symplectomorphic to
$$\mathcal{H}(\lambda_1, \ldots, \lambda_{l_1})\,\times \cdots \times \mathcal{H}(\lambda_{n-l_m+1}, \ldots, \lambda_{n}),$$
for some partition $n=\sum_{i=1}^m l_i$ of $n$ and some $\lambda_1, \ldots, \lambda_n\in \Z_{>0}$.
\end{corollary}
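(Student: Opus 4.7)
The plan is to invoke the corollary following Proposition \ref{basic change} to reduce the strictly upper-triangular matrix $A$ encoding $(M,\omega)=(M_A,\omega_\lambda)$ to a matrix with a simple block structure, and then to recognize the resulting manifold as a product of standard models $\mathcal{H}$.

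By \cite[Proposition 3.1]{CM}, the $\Q$-triviality of $M_A$ forces every generator $x_k$ with $k<n$ to be of exceptional type, so the hypothesis of the corollary after Proposition \ref{basic change} is satisfied. Applying that corollary produces a symplectomorphic Bott manifold $(M_{\tA},\omega_{\tlambda})$ with strictly upper-triangular $\tA\in \mat$ and $\tlambda\in \Z^n_{>0}$ such that, writing $\tilde\alpha_k = \tilde c_k\, \tilde y_{\ell(k)}$ in the data of $\tA$, the entry $\tA^k_{\ell(k)}$ equals $0$ when $\tilde c_k$ is even and $-1$ when $\tilde c_k$ is odd. Comparing the coefficients of $x_{\ell(k)}$ on the two sides of $\tilde\alpha_k = \tilde c_k\, \tilde y_{\ell(k)}$, and using $\tA^{\ell(k)}_{\ell(k)} = 0$ from strict upper-triangularity, yields $\tA^k_{\ell(k)} = -\tilde c_k$, so $\tilde c_k\in\{0,1\}$ and therefore $\tilde\alpha_k$ is either $0$ or $\tilde y_{\ell(k)}$.

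Next I would show that $\tA$ has the desired block form. If $\tilde\alpha_k=0$, the entire row $k$ of $\tA$ vanishes; call such a $k$ a ``top'' (note that $n$ is automatically a top since $\tA$ is strictly upper triangular). If $\tilde\alpha_k = \tilde y_{\ell(k)} = x_{\ell(k)} - \tfrac12\tilde\alpha_{\ell(k)}$, matching coefficients of $x_j$ for $j\neq \ell(k)$ gives $\tA^k_j = -\tfrac12\tA^{\ell(k)}_j$. For this to lie in $\Z$ we need $\tA^{\ell(k)}_j$ to be even for every $j$; but if $\ell(k)$ were itself non-top (odd exceptional) in $\tA$ then $\tA^{\ell(k)}_{\ell(\ell(k))} = -1$ would contradict this parity. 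Hence $\ell(k)$ must be a top, $\tA^{\ell(k)}_j = 0$ for all $j$, and consequently the only non-zero entry of row $k$ of $\tA$ is $\tA^k_{\ell(k)} = -1$.

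Thus the non-zero entries of $\tA$ form a two-level structure: each non-top row has exactly one $-1$, pointing to a top row. Define a partition $\{1,\dots,n\} = T_1\sqcup\cdots\sqcup T_m$ where the block $T_j$ consists of a top $t_j$ together with all non-tops $k$ with $\ell(k)=t_j$. A permutation of indices that places each $T_j$ in consecutive positions with $t_j$ as the largest element of its block preserves strict upper-triangularity, and the resulting permuted matrix is block-diagonal with each block equal to the defining matrix of $\mathcal{H}(\lambda_{T_j})$. Since such a permutation corresponds to a $\gl(n,\Z)$ transformation of the moment polytope allowed by Delzant's classification, $(M,\omega)$ is symplectomorphic to $\mathcal{H}(\lambda_{T_1})\times\cdots\times\mathcal{H}(\lambda_{T_m})$ for the induced reindexing of $\tlambda$. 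The main difficulty in this plan is the parity/integrality argument of the previous paragraph: one has to verify carefully that the iterated applications of Proposition \ref{basic change} hidden inside the corollary really do yield, in the final matrix, the identity $\tA^k_{\ell(k)} = -\tilde c_k$ together with the integrality constraint on the remaining entries of row $k$; once this is in place, the rigid top/leaf structure and hence the product decomposition follow immediately.
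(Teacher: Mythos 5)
Your argument is correct and follows the same route the paper sketches: invoke \cite[Proposition 3.1]{CM} for exceptionality, apply the reduction corollary preceding Corollary \ref{std form} to put $\tA^k_{\ell(k)}\in\{0,-1\}$, and then deduce the two--level ``top/leaf'' structure of $\tA$; you simply spell out the coefficient-matching and integrality bookkeeping that the paper leaves implicit (the ``Therefore'' after citing \cite{CM}). The difficulty you flag at the end is real but lives in the proof of the unnamed reduction corollary (one must check the sign hypothesis $A^k_\ell+\tA^k_\ell\ge 0$ of Proposition \ref{basic change} is met, possibly after a $\gl(n,\Z)$ reflection of the polytope), not in the step from that corollary to Corollary \ref{std form}, which your paragraphs two and three handle correctly.
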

The above standard model is easy enough, so that one can understand all possible ring isomorphisms between cohomology rings and prove that they are induced by maps on manifolds.

\begin{lemma}\label{str rigidity hn}
Fix $n \in \Z_{>0}$.
Let $\sum_{i=1}^m l_i = \sum_{i = 1}^{\wt{m}} \wt{l}_i = n$
be  partitions  of $n$,
and let $\lambda, \tlambda \in (\R_{>0})^n$.
Consider symplectic Bott manifolds
\begin{gather*}
(M,\omega)= \mathcal{H}(\lambda_1,\dots,\lambda_{l_1} )
\times \cdots \times \mathcal{H}(\lambda_{n - l_m + 1},\dots,\lambda_{n}); \\
(\tM,\tomega)= \mathcal{H}(\tlambda_1,\dots,\tlambda_{\wt{l}_1} )
\times \cdots \times \mathcal{H}(\tlambda_{n - \wt{l}_{\wt{m}} + 1},\dots,\tlambda_{n}).
\end{gather*}
Given a ring isomorphism $F \colon \hh^*(M;\Z) \to \hh^*(\tM;\Z)$ 
such that $F[\omega] = [\tomega]$,
there exists a symplectomorphism $f$  from  $(\tM,\tomega)$ to
$(M,\omega)$ so that $\hh^*(f) = F$.
\end{lemma}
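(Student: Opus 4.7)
The plan is to read off the partition $n=\sum l_i$ from the integer cohomology ring, match factors and their $\lambda$-values using $F$, and then assemble $f$ as a product of factor-wise symplectomorphisms permuting indices where the $\lambda$'s agree. The crucial combinatorial step is the recovery of the factorization. In each standard factor ring $\hh^*(\mathcal{H}(\lambda_1,\ldots,\lambda_l);\Z)=\Z[x_1,\ldots,x_l]/(x_i^2-x_ix_l,\,x_l^2)$ a direct calculation shows that a degree-$2$ class $\sum a_i x_i$ squares to zero if and only if at most one of $a_1,\ldots,a_{l-1}$ is nonzero and, if $a_k$ is that one, $a_k=-2a_l$. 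So the integer square-zero classes form $l$ rays: $\Z x_l$ and $\Z(x_l-2x_k)$ for $k<l$. Only the ray $\Z x_l$ consists of primitive elements at every lattice point, singling out the ``last-factor'' class $x_l$. Taking the tensor product over all factors of $M$ and analyzing how these square-zero cones recombine in $\hh^2(M;\Z)$ produces a canonical decomposition $\hh^2(M;\Z)=\bigoplus_i V_i$ whose summand ranks recover the partition $n=\sum l_i$. Applying $F$ forces $m=\wt{m}$ and, after reordering the factors, $l_i=\wt{l}_i$.

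With the partition matched, $F$ restricts on each corresponding pair of factors to a ring isomorphism $\hh^*(\mathcal{H}(\lambda_1,\ldots,\lambda_l);\Z)\to\hh^*(\mathcal{H}(\tlambda_1,\ldots,\tlambda_l);\Z)$ sending $\sum\lambda_i x_i$ to $\sum\tlambda_i\tx_i$, both with strictly positive coefficients. A short enumeration of the integer ring isomorphisms between these rings --- writing $\varphi(x_i)=\sum a_{ij}\tx_j$, using the relations $x_i^2=x_ix_l$ and $x_l^2=0$ to restrict the entries $a_{ij}$, and imposing invertibility --- shows that, subject to sending one strictly positive class to another strictly positive class, the only such isomorphisms permute the first $l-1$ coordinates among indices where the $\lambda_i$ agree, while fixing $x_l\mapsto\tx_l$. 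This forces $\lambda_l=\tlambda_l$ and matches $\{\lambda_1,\ldots,\lambda_{l-1}\}=\{\tlambda_1,\ldots,\tlambda_{l-1}\}$ as multisets.

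Each such coordinate permutation is realized by a biholomorphism of $\mathcal{H}(\lambda_1,\ldots,\lambda_l)$ obtained by swapping isomorphic $\CP^1$-bundle stages in the tower (the base stages with equal $\lambda$'s are genuinely isomorphic as holomorphic line bundles, and the top line bundle $L=\mathcal{O}(-1,\ldots,-1)$ is symmetric under permutation of the base factors); since the two resulting K\"ahler forms are cohomologous, Moser's trick upgrades each biholomorphism to a symplectomorphism with the prescribed cohomological action. Permutations of isomorphic factors of $M$ itself are handled analogously. Taking the product over the factors assembles the required symplectomorphism $f\colon(\tM,\tomega)\to(M,\omega)$ with $\hh^*(f)=F$.

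The main obstacle is the first step: because $\hh^*(M;\Q)\cong\hh^*((\CP^1)^n;\Q)$ for every $\Q$-trivial Bott manifold of dimension $2n$, the partition is invisible over $\Q$ and must be recovered from the integer lattice structure. The combinatorics of how the square-zero rays of each factor interlock inside the tensor product is the essential input; once this is settled, matching the $\lambda$'s and realizing the remaining discrete symmetries by symplectomorphisms follows from the explicit form of the standard models in Corollary \ref{std form}.
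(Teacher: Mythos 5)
The overall architecture — recover the partition from the integral square-zero classes, match factors and $\lambda$'s using positivity of $[\omega]$, then realize the resulting permutation as a symplectomorphism — is the same as the paper's. However, your combinatorial criterion for recovering the factorization does not work. You claim that ``only the ray $\Z x_l$ consists of primitive elements at every lattice point,'' but every ray $\Z v$ meets the lattice exactly in $\{kv':k\in\Z\}$ for a primitive $v'$, so this sentence fails to distinguish $\Z x_l$ from $\Z(x_l-2x_k)$. More to the point, no purely ring-theoretic criterion can single out $x_l$: the assignment $x_l\mapsto 2x_1-x_l$, $x_1\mapsto x_1$, $x_k\mapsto x_k+x_1-x_l$ (for $1<k<l$) is a ring automorphism of $\Z[x_1,\dots,x_l]/(x_i^2-x_ix_l,\,x_l^2)$ that moves the ray $\Z x_l$ to $\Z(2x_1-x_l)$. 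What actually recovers the factorization is a mod-$2$ observation: within one standard factor all primitive square-zero classes are congruent to one another modulo $2\,\hh^2(M;\Z)$ (since $(2x_i-x_l)-x_l=2(x_i-x_l)$), while across distinct factors they never are. The mod-$2$ equivalence classes therefore partition the primitive square-zero set by factor, giving the partition $n=\sum l_i$. To then pin down $x_l$ inside a factor, you must invoke positivity of $[\omega]$ — exactly as you do for the permutation data later — not a ring-theoretic feature; the automorphism above maps $\sum\lambda_ix_i$ to a class with a negative $x_l$-coefficient, which is what excludes it.

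Once the partition and the permutation are fixed, the paper constructs $f$ directly via Delzant's theorem: the permutation gives a $\gl(n,\Z)$ matrix $\Lambda$ with $\Lambda^T(\Delta(\tA,\tlambda))=\Delta(A,\lambda)$, hence an equivariant symplectomorphism, and tracking the Poincar\'e duals of facets shows $\hh^*(f)=F$. Your biholomorphism-plus-Moser route is a reasonable alternative in principle, but it is more indirect and leaves the verification that $\hh^*(f)=F$ implicit; the Delzant route makes that bookkeeping explicit because the facets map to facets in a controlled way. So the main thing to fix is the first step: replace the ``primitive on every lattice point'' criterion by the mod-$2$ grouping, and then use positivity of the K\"ahler class (not the ring alone) to distinguish $x_l$ and the permutation.
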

\begin{proof}[Sketch of a proof]
First consider the situation when
$$(M,\omega)= \mathcal{H}(\lambda_1,\dots,\lambda_{n}) 
\quad \mbox{and} \quad 
(\tM,\tomega)= \mathcal{H}(\tlambda_1,\dots,\tlambda_{n}).$$
The $\Q$-triviality assumption implies that there are exactly $2n$ primitive
classes in $\hh^2(M;\Z)$ which square to $0$. 
A short computation shows that these are $\pm z_1,\dots, \pm z_n$,
where $z_n = x_n$ and $z_i = 2 x_i - x_n$ for all $i < n$. 
Similarly for $\tM$. Any  ring isomorphism between $\hh^*(M;\Z)$ and  $ \hh^*(\tM;\Z)$ must restrict to a bijection on the set of such elements, that is, there exists $\epsilon=(\epsilon_1, \ldots, \epsilon_n)\in \{-1,1\}^n$ and a permutation $\sigma \in \mathcal{S}_n$ such that $F(z_j)=\epsilon_j \tz_{\sigma(j)}$.
Moreover, presenting $[\omega]$ (resp. $[\tomega]$) in the basis $\{z_1,\ldots,z_n\}$ (resp. $\{\tz_1,\ldots,\tz_n\}$) and recalling that the isomorphism $F$ maps 
$[\omega]$ to $[\tomega]$, one can deduce that $F$ acts by a permutation: $F(z_j)=\tz_{\sigma(j)}$ for some permutation $\sigma \in \mathcal{S}_n$ with $\sigma(n)=n$, and that $\lambda_j=\tlambda_{\sigma(j)}$. 
Then $F$ also takes $x_i$ to $x_{\sigma(i)}$ and it holds that
$ A^i_j=\tA^{\sigma(i)}_{\sigma(j)}$ for all $i,j.$
If $\Lambda \in \gl(n,\Z)$ denotes the unimodular matrix taking $e_i$ to $e_{\sigma(i)}$, 
then $\Lambda^T( \Delta(\tA,\tlambda)) = \Delta( A,  \lambda)$. Therefore, by the Delzant theorem, the manifolds $(M,\omega)$ and $(\tM,\tomega)$ are (equivariantly) symplectomorphic, by some symplectomorphism $f$. Moreover, as $\Lambda^T$ maps the facet $\{  \langle p, e_{\sigma(j)} \rangle = 0 \} \cap \Delta(\tA,\tlambda)$ to the facet $\{ \langle p, e_j \rangle = 0 \} \cap \Delta(A,\lambda)$,
 and 
$\{  \langle p, e_{\sigma(j)} + \sum_i \tA_{\sigma(j)}^i e_i \rangle = \tlambda_{\sigma(j)} \} \cap \Delta(\tA,\tlambda)$ to  $\{ \langle p, e_j  + \sum_i A_j^i e_i \rangle = \lambda_j \} \cap \Delta(A,\lambda)$,
the map $\hh^*(f)$ induced by $f$ on cohomology maps the Poincar\'e duals of preimages of these facets accordingly. That is, $\hh^*(f)=F$.

In the general case, denote by $\lambda^{l_s}$ the $l_s$-tuple of
numbers $(\lambda_{l_1+\cdots+l_{s-1}+1},\dots, \lambda_{l_1 +\cdots +l_s}),$ and define $\tlambda^{\tl_s}$ similarly.
Again, we look at primitive elements with trivial squares. In $\hh^*(M;\Z)$ these are precisely
$$\pm x_{l_s} \mbox{ and } \pm(2x_i-x_{l_s}) \textrm{ for }
s=1,\ldots,m
\mbox{ and } 
i_{s-1}< i <i_s .$$
Note that each such element is contained in some subring $\hh^*(\mathcal{H}(\lambda^{l_s});\Z)\subseteq \hh^*(M;\Z)$, and that all primitive elements in $\hh^*(\mathcal{H}(\lambda^{l_s});\Z)$ whose square is zero  
are equal modulo $2$. Therefore $F$ restricts to an isomorphism from $\hh^*(\mathcal{H}(\lambda^{l_s});\Z)$ to $\hh^*(\mathcal{H}(\tlambda^{\tl_r});\Z)$ for some $r$ with $l_s=\tl_r$. This implies that both partitions of $n$ must be equal up to permutation of their factors. Repeating the arguments of the previous paragraph, one can construct a symplectomorphism inducing the ring isomorphism $F$.
\end{proof}
\begin{proof}[Proof of Theorem~\ref{thm coh rig} ]
Theorem~\ref{thm coh rig} follows immediately from Corollary \ref{std form} and Lemma \ref{str rigidity hn}.
\end{proof}


\begin{thebibliography}{999}
\setlength{\labelwidth}{1.3cm} 
\bibitem[A]{A} D. Anderson, \emph{Okounkov bodies and toric degenerations}, Math. Ann. (2013), 1183--1202.
\bibitem[C]{CC}
A. Caviedes Castro, 
\emph{Upper bound for the Gromov width of coadjoint orbits of compact Lie groups},
{J. Lie Theory} {\bf 26} (2016), No. 3, 821--860. 

\bibitem[CLS]{CLS} D. Cox, J. Little, H. Schenck, \emph{Toric Varieties}, Graduate Studies in Mathematics, {\bf 124}. 
American Mathemtical Society, Providence, RI, 2011. xxiv+841pp. ISBN: 978-0-8218-4819-7.

\bibitem[CMS11]{CMSsurvey} S. Choi, M. Masuda, D. Suh, \emph{Rigidity Problems in Toric Topology: A Survey}, Proceedings of the Steklov Institute of Mathematics, {\bf 275} (2011), 177--190

\bibitem[CM]{CM} S. Choi, M. Masuda, \emph{Classification of $\Q$ trivial Bott manifolds}, J. Symplect. Geom., {\bf 10}, No. 3, (2012), 447--461.

\bibitem[D]{D} V.I. Danilov,\emph{The geometry of toric varieties}, (Russian),
Uspekhi Mat. Nauk, {\bf 33} (1978), no. 2, 85--134, 247; English translation: Russian Math. Surveys {\bf 33}  (1978), no. 2, 9--154. 

\bibitem[F]{F} W.~Fulton, \emph{Introduction to Toric Varieties} (AM-131). Princeton University Press, 1993.

\bibitem[FFL]{FFL} 
X.~Fang, G.~Fourier, P.~Littelmann,
\emph{Essential bases and toric degenerations arising from generating sequences}, Adv. Math, {\bf 312} (2017), 107--149
.

\bibitem[FLP]{FLP} X.~Fang, P.~Littelmann, M.~Pabiniak, \emph{Simplices in Okounkov bodies and the Gromov width of coadjoint orbits}, Bull. Lond. Math. Soc., {\bf 50 }(2018), Issue 2, 202--218, https://doi.org/10.1112/blms.12130;

\bibitem[H]{H} M. D. Hamilton, \emph{The quantization of a toric manifold is given by the integer lattice points in the moment polytope}, arXiv:0708.2710 [math.SG];


\bibitem[HK]{HK} M.~Harada, K.~Kaveh, \emph{Integrable systems, toric degenerations and Okounkov bodies}, Invent. Math. (2015), doi: 10.1007/s00222-014-0574-4, 1--59.

\bibitem[HP]{HP}
I. Halacheva, M. Pabiniak,
\emph{Gromov width of coadjoint orbits of the symplectic group}, Pacific J. Math.,
{\bf 295} (2018), No. 2, 403--420.

\bibitem[GK]{GK} M. Grossberg, Y. Karshon, \emph{Bott towers, complete integrability, and the extended character of representations}, Duke Math. J., {\bf 76}, No. 1 (1994), 23--58.

\bibitem[K1]{Kcrbas} K.~Kaveh, \emph{Crystal bases and Okounkov bodies}, Duke Math. J. {\bf 164} (2015), No. 13, 2461--2506.

\bibitem[K2]{K}
K. Kaveh, \textit{Toric degenerations and symplectic geometry
of projective varieties}, J. Lond. Math. Soc., (2018) https://doi.org/10.1112/jlms.

\bibitem[KT]{KT} Y.~Karshon and S.~Tolman, \textit{The Gromov width of complex Grassmannians}, Algebr. Geom. Topol. {\bf 5} (2005), 911--922.

\bibitem[La]{La} J. Lane, \textit{A completely integrable system on $G_2$ coadjoint orbits}, preprint arXiv:1605.01676v3 (2016).

\bibitem[Li]{Li}
P. Littelmann, \textit{Cones, crystals, and patterns}, Transform. Groups {\bf 3} (1998), No.2, 145--179.

\bibitem[L1]{LuGW} G.~Lu, \emph{Gromov-Witten invariants and pseudo symplectic capacities}, Israel J. Math. {\bf 156} (2006), 1--63.

\bibitem[L2]{LuSC} G.~Lu, \emph{Symplectic capacities of toric manifolds and
related results}, Nagoya Math. J. {\bf 181} (2006), 149--184.

\bibitem[NNU]{NNU} T.~Nishinou, Y.~Nohara, K.~Ueda, \emph{Toric degenerations of Gelfand-Cetlin systems and potential functions}, Adv. Math. {\bf 224} (2010), 648--706.

\bibitem[P]{P} M.~Pabiniak, \emph{Gromov width of non-regular coadjoint orbits of U(n), SO(2n) and SO(2n+1)}, Math. Res. Lett. {\bf 21} (2014), No. 1, 187--205.

\bibitem[PT]{PT} M.~Pabiniak, S.~Tolman, \emph{Symplectic cohomological rigidity via toric degnerations}, in preparation.

\bibitem[Z]{Z} M.~Zoghi, \emph{The Gromov width of Coadjoint Orbits of Compact Lie Groups}, PhD Thesis, University of Toronto, 2010.

\end{thebibliography}
\end{document}